\begin{document}

\newtheorem{thm}{Theorem}[section]
\newtheorem{prop}[thm]{Proposition}
\newtheorem{coro}[thm]{Corollary}
\newtheorem{conj}[thm]{Conjecture}
\newtheorem{example}[thm]{Example}
\newtheorem{lem}[thm]{Lemma}
\newtheorem{rem}[thm]{Remark}
\newtheorem{hy}[thm]{Hypothesis}
\newtheorem*{acks}{Acknowledgements}

\theoremstyle{definition}
\newtheorem{de}[thm]{Definition}

\newcommand{\C}{{\mathbb{C}}}
\newcommand{\Z}{{\mathbb{Z}}}
\newcommand{\N}{{\mathbb{N}}}
\newcommand{\Zr}{{\Z^r}}
\newcommand{\Zall}{{\Z^{r+1}}}
\newcommand{\Zalltwisted}{{\ovN\Z\times\Z^{r}}}

\newcommand{\Aut}{{{\rm Aut}}}
\newcommand{\End}{{{\rm End} }}
\newcommand{\Hom}{{\rm Hom}}
\newcommand{\Image}{{\rm Im}}
\newcommand{\Ker}{{{\rm Ker}}}
\newcommand{\Mod}{{{\rm Mod}}}
\newcommand{\Res}{{\rm Res}}
\newcommand{\Span}{{\rm Span}}

\newcommand{\rsymbol}[1]{{\bf {#1}}}

\newcommand{\bda}{{\rsymbol{a}}}
\newcommand{\x}{{\rsymbol{x}}}
\newcommand{\y}{{\rsymbol{y}}}
\newcommand{\z}{{\rsymbol{z}}}
\newcommand{\m}{{\rsymbol{m}}}
\newcommand{\Nfactor}{{\rsymbol{N}}}
\newcommand{\n}{{\rsymbol{n}}}
\newcommand{\bk}{{\rsymbol{k}}}
\newcommand{\bdt}{{\rsymbol{t}}}
\newcommand{\choice}[2]{{\left( \renewcommand{\arraystretch}{1.0} \begin{array}{c} {#1}\\{#2} \end{array} \right)}}

\newcommand{\dfunc}[3]{{#1^{-1}\delta\left(\frac{{#2}}{{#3}}\right)}}

\newcommand{\ESalltwisted}[1]{{\mathcal{E}\left({#1},r;N_{0}\right)}}
\newcommand{\ESalltwistedtemp}[1]{{\mathcal{E}\left({#1},r;1 \right)}}
\newcommand{\ESall}[1]{{\mathcal{E}\left({#1},r\right)}}

\newcommand{\ovN}{\frac{1}{N_{0}}}

\newcommand{\vac}{\rsymbol{1}}
\newcommand{\varz}[2]{{{#1}_0^{\pm {#2}}}}
\newcommand{\varr}[1]{{{#1}_1^{\pm 1},\dots,{#1}_r^{\pm 1}}}

\def \<{{\langle}}
\def \>{{\rangle}}

\newcommand{\tva}[1]{{toroidal vertex algebra}}
\newcommand{\rtva}[1]{{$(r+1)$-\tva{}}}
\newcommand{\Y}[3]{{Y\left( {#1};{#2},{#3} \right)}}
\newcommand{\Ymod}[4]{{Y_{#1}\left( {#2};{#3},{#4} \right)}}
\newcommand{\Ye}[3]{{\Ymod{{\mathcal{E}}}{#1}{#2}{#3}}}

\newcommand{\suball}[2]{{{#1},{#2}}}
\newcommand{\rangeall}[2]{{\left(\suball{{#1}}{{#2}}\right)\in \Zall}}
\newcommand{\rangealltwisted}[2]{{\left(\suball{{#1}}{{#2}}\right)\in \Zalltwisted}}
\newcommand{\sumall}[2]{{\sum\limits_{\rangeall{{#1}}{{#2}}}}}
\newcommand{\sumalltwisted}[2]{{\sum\limits_{\rangealltwisted{{#1}}{{#2}}}}}

\newcommand{\set}[2]{{\left\{ {#1} \,\left|\, {#2} \right.\right\} }}

\newcommand{\rtu}[1]{\omega_{#1}}

\renewcommand{\arraystretch}{1.5}

\newcommand{\stct}[1]{satisfying the condition that}
\newcommand{\andtext}{{\text{and}}}
\newcommand{\fortext}{{\text{for }}}

\newcommand{\cent}{{\mathfrak{c}}}
\newcommand{\g}{{\mathfrak{g}}}
\newcommand{\Lie}{{\tau}}
\newcommand{\Lieu}{{\mathcal{L}}}
\newcommand{\Borel}{{\mathfrak{B}}}
\newcommand{\Nil}{{\mathfrak{N}}}

\newcommand{\T}{{V_{\Lieu}\left( \ell,0 \right)}}

\makeatletter
\@addtoreset{equation}{section}
\def\theequation{\thesection.\arabic{equation}}
\makeatother
\makeatletter

\begin{center}
{\Large \bf Twisted Modules for Toroidal Vertex Algebras}
\end{center}

\begin{center}
{Fei Kong$^{a}$, Haisheng Li$^{b}$\footnote{Partially supported by
 China NSF grant (No. 11471268)},
Shaobin Tan$^{a}$\footnote{Partially supported by China NSF grant (No. 11471268)}
and Qing Wang$^{a}$\footnote{Partially supported by
 China NSF grant (No. 11371024), Natural Science Foundation of Fujian Province
(No. 2013J01018) and Fundamental Research Funds for the Central
University (No. 2013121001).}\\
$\mbox{}^{a}$School of Mathematical Sciences, Xiamen University,
Xiamen 361005, China\\
$\mbox{}^{b}$ Department of Mathematical Sciences,\\
Rutgers University, Camden, NJ 08102, USA\\}
\end{center}

\begin{abstract}
This is a paper in a series systematically to study toroidal vertex algebras.
Previously, a theory of toroidal vertex algebras and modules
was developed and toroidal vertex algebras were explicitly associated to toroidal Lie algebras.
In this paper, we study twisted modules for toroidal vertex algebras. More specifically, we introduce a notion of twisted module for a general toroidal vertex algebra with a finite order automorphism  and we give a general construction of toroidal vertex algebras and  twisted modules. We then use this construction to establish a natural association of toroidal vertex algebras and twisted modules to twisted toroidal Lie algebras. This together with some other known results
implies that almost all extended affine Lie algebras can be associated to toroidal vertex algebras.
\end{abstract}

\section{Introduction}
Extended affine Lie algebras are natural generalizations of affine Kac-Moody Lie algebras (see \cite{AABGP}), where
an important family of extended affine Lie algebras consists of  toroidal Lie algebras
 which are central extensions of multi-loop algebras of finite dimensional simple Lie algebras.
It is known that (untwisted) affine Lie algebras can be canonically associated with vertex algebras and modules (cf. \cite{FZ}, \cite{DL}, \cite{Li1}, \cite{LL}),
while twisted affine Lie algebras can be associated with vertex algebras in terms of twisted modules (see \cite{FLM}, \cite{Li2}).
In \cite{BBS},  a natural connection of certain toroidal Lie algebras with vertex algebras was established.
On the other hand, it is natural to study suitable toroidal analogues of vertex algebras and their relations with
toroidal Lie algebras from a different perspective.
This is also potentially important from the viewpoint of physics conformal field theory in high dimensions (cf. \cite{IKU}, \cite{IKUX}).
With this as the main driving force, a theory of toroidal vertex algebras and modules was developed
and toroidal vertex algebras and their modules were associated to toroidal Lie algebras in \cite{LTW}.

In the current paper, we continue to study twisted modules for toroidal vertex algebras, with a goal to associate twisted modules for certain toroidal vertex algebras to modules for twisted toroidal Lie algebras.
To achieve this goal, we develop a theory of twisted modules for a toroidal vertex algebra
with a finite order automorphism and we establish a conceptual construction of toroidal vertex algebras and twisted modules. By using this general result, we successfully associate twisted modules for certain  toroidal vertex algebras to
twisted toroidal Lie algebras.

Note that affine Kac-Moody algebras were classified as untwisted affine Lie algebras and
twisted affine Lie algebras, where
untwisted affine Lie algebras can be realized as the universal central extensions of  loop algebras of finite dimensional simple Lie algebras and twisted affine Lie algebras can be realized as fixed points subalgebras of untwisted affine Lie algebras under Dynkin digram automorphisms (see \cite{K}).
For extended affine Lie algebras,  essentially this is also the case;
it was proved (see \cite{ABFP, N1, N2, BGK, BGKN, Y}) that
almost all extended affine Lie algebras (except those constructed from the centerless irrational Lie tori) can be realized
as twisted toroidal Lie algebras. In view of this, almost all extended affine Lie algebras can be associated to toroidal vertex algebras.

Now, we give a more detailed account of the contents of this paper.
First, an $(r+1)$-toroidal vertex algebra (with $r$ a positive integer) is defined (see \cite{LTW})
to be a vector space $V$ equipped with a linear map
\begin{eqnarray*}
Y(\cdot;x_0,\x):&& V\rightarrow \Hom(V,V[[ \varr{x}]
]((x_0)))\\
&& v\mapsto Y(v;x_0,\x),
\end{eqnarray*}
where $\x=(x_1,\dots, x_r)$, and equipped with a distinguished vector ${\bf 1}$ of $V$, such that
\begin{eqnarray*}
\Y{\vac}{x_0}{\x}v=v&\andtext & \Y{v}{x_0}{\x}\vac\in V[[ x_0,\varr{x}]]\quad\fortext v\in V,
\end{eqnarray*}
and such that for $u,v\in V$,
\begin{eqnarray*}
&&\dfunc{z_0}{x_0-y_0}{z_0}Y(u;x_0,\z\y)Y(v;y_0,\y)-\dfunc{z_0}{y_0-x_0}{-z_0}Y(v;y_0,\y)Y(u;x_0,\z\y)\nonumber\\
&&\hspace{2cm}\quad= y_0^{-1}\delta\left(\frac{x_0-z_0}{y_0}\right) Y(Y(u;z_0,\z)v;y_0,\y).
\end{eqnarray*}
Now, let $V$ be an \rtva{} with a finite order automorphism $\sigma$ of period $N$.
We define a {\em $\sigma$-twisted $V$-module} to be a vector space $W$ equipped with a linear map
\begin{eqnarray*}
Y_W(\cdot;x_0,\x):&& V\rightarrow \Hom(W,W[[ \varr{x}]
]((x_0^{\frac{1}{N}})))\\
&& v\mapsto Y_W(v;x_0,\x)
\end{eqnarray*}
 such that
\begin{eqnarray*}
\Ymod{W}{\vac}{x_0}{\x}=1_W,
\end{eqnarray*}
and for $u,v\in V$,
\begin{eqnarray*}
&&\dfunc{z_0}{x_0-y_0}{z_0}\Ymod{W}{u}{x_0}{\z\y}\Ymod{W}{v}{y_0}{\y}-\dfunc{z_0}{y_0-x_0}{-z_0}\Ymod{W}{v}{y_0}{\y}\Ymod{W}{u}{x_0}{\z\y}\nonumber\\
&&\hspace{1cm}\quad=\frac{1}{N}\sum\limits_{j=0}^{N-1} y_0^{-1}\delta\left(  \rtu{N}^j \left(\frac{x_0-z_0}{y_0}\right)^{\frac{1}{N}}     \right) \Ymod{W}{\Y{\sigma ^j u}{z_0}{\z}v}{y_0}{\y}.
\end{eqnarray*}
We then give a conceptual construction of toroidal vertex algebras and twisted modules, analogous to a result of \cite{Li2}
for vertex algebras.
Let $W$ be a vector space and let $r$ and $N$ be positive integers. Set
\begin{eqnarray*}
\mathcal{E}(W,r;N)=\Hom(W,W[[\varr{x}]]((x_0^{\frac{1}{N}} )))\subset \End(W)[[x_0^{\pm
\frac{1}{N}},\varr{x}]].
\end{eqnarray*}
We equip the space $\mathcal{E}(W,r;N)$ with the following $\Z_{N}$-grading
\begin{eqnarray*}
\mathcal{E}(W,r;N)=\mathop{\bigoplus}\limits_{s=0}^{N-1}\mathcal{E}(W,r;N)_s,
\end{eqnarray*}
where
$\mathcal{E}(W,r;N)_s=x_0^{-\frac{s}{N}}\Hom(W,W[[\varr{x}]]((x_0)))$.
Let $\sigma$ be the linear automorphism of $\mathcal{E}(W,r;N)$ with $\mathcal{E}(W,r;N)_s$ as the eigenspace
of eigenvalue $e^{2\pi is/N}$ for $0\le s<N$. 
We call a subset $U$ of $\mathcal{E}(W,r;N)$ local if
for any $a(x_0,\x),b(x_0,\x)\in U$, there exists a nonnegative
integer $k$ such that
\begin{eqnarray*}
(x_0-y_0)^k \left[ a(x_0,\x),b(y_0,\y) \right]=0.
\end{eqnarray*}
As the main result, we prove that every graded local subspace $U$ of $\mathcal{E}(W,r;N)$ generates an \rtva{} $\<U\>$ in
a certain canonical way with $\sigma$ as an automorphism of period $N$ and that $W$ is a natural $\sigma$-twisted module
for $\<U\>$.

The second half of this paper is devoted to establishing
an association of toroidal vertex algebras to twisted toroidal Lie algebras.
Let $\g$ be a finite dimensional simple Lie algebra and let $\sigma_0,\sigma_1,\dots,\sigma_r$
be mutually commuting automorphisms of
$\g$ with finite orders $N_0,N_1,\dots,N_r$, respectively.
Set $G=\langle \sigma_0,\sigma_1,\dots,\sigma_r \rangle\subset \mbox{Aut}(\g)$.
Consider the following  $(r+1)$-loop Lie algebra
$$L_{r+1}(\g,N_0)=\g\otimes \C[t_0^{\pm \frac{1}{N_{0}} },\varr{t} ]$$
and its central extension
\begin{eqnarray*}
\widehat{L_{r+1}}(\g,N_{0})=\g\otimes \C[t_0^{\pm \frac{1}{N_{0}} },\varr{t}
]\oplus \C\cent.
\end{eqnarray*}
Let $\tau$ be the $G$-fixed points subalgebra
\begin{eqnarray*}
\tau=\left(\widehat{L_{r+1}}(\g,N_{0})\right)^{G}.
\end{eqnarray*}
Note that it was proved in \cite{ABFP} that for almost all extended affine Lie
algebras of nullity $r+1$,
the centerless core is isomorphic to such a Lie algebra $\tau$.

To associate \rtva{}s to twisted toroidal Lie algebra $\tau$, we consider a subalgebra of
the $(r+1)$-toroidal Lie algebra $\widehat{L_{r+1}}(\g)=\widehat{L_{r+1}}(\g,1)$.  Set
\begin{eqnarray*}
\mathcal{L}=\left(\mathop{\bigoplus}\limits_{\m\in \Zr}\g_\m\otimes  \bdt^\m\C[
t_0^{\pm 1}]\right) \bigoplus \C\cent,
\end{eqnarray*}
where $\g_{\m}=\set{a\in\g}{\sigma_j(a)=\rtu{N_j}^{m_j}a\  \  \mbox{ for  }1\leq j\leq r }$
for $\m=(m_{1},\dots,m_{r})\in \Zr$.
Set
\begin{eqnarray*}
\mathcal{L}^{\ge 0}=\left(\mathop{\bigoplus}\limits_{\m\in\Zr}\g_\m\otimes
\bdt^\m\C\left[t_0 \right] \right)\bigoplus \C\cent,
\end{eqnarray*}
a subalgebra of $\mathcal{L}$.
For any complex number $\ell$, we construct an  \rtva{} $V_\mathcal{L}(\ell,0)$ whose
underlying vector space is the following induced module
\begin{eqnarray*}
V_\mathcal{L}(\ell,0)=U\left( \mathcal{L} \right)\otimes_{U(\mathcal{L}^{\ge 0})}
\left(\g+\C \right),
\end{eqnarray*}
where $\g\oplus \C$ is equipped with a suitably defined $\mathcal{L}^{\ge 0}$-module structure such that $\cent$
acts as scalar $\ell$.
The automorphism $\sigma_0$ of $\g$ is shown to induce an
automorphism $\tilde{\sigma}$ of $V_{\mathcal{L}}(\ell,0)$ with order $N_0$.
%
Then we show that the category of restricted
$\tau$-modules of level $\ell$ is naturally isomorphic to that of $\tilde{\sigma}$-twisted
$V_\mathcal{L}(\ell,0)$-modules satisfying a certain equivariance property (see Theorem \ref{last-thm} for details).

This paper is organized as follows: In Section 2, we define the notation of
twisted module for a general toroidal vertex algebra and we present some basic
results. In Section 3, we give a general construction of toroidal vertex
algebras and their twisted modules. In Section 4, we associate toroidal
vertex algebras and their twisted modules to twisted toroidal Lie algebras.

\section{Twisted modules for toroidal vertex algebras}
In this section, we define the notion of  twisted module for a toroidal vertex algebra
with a finite order automorphism and we present some basic properties for twisted modules.

First of all, throughout this paper, we denote by $\C$, $\N$, and $\mathbb{Z}$
the field of complex numbers, the set of nonnegative integers, the set of integers, respectively.
The symbols $x,y,z,x_{0},y_{0},z_0,x_{1},y_{1},z_1,\dots$ denote mutually commuting independent formal variables.
All vector spaces in this paper are considered to be over  $\mathbb{C}$.

Let $r$ be a positive integer which is fixed throughout this paper. For any $\m=(m_1,\dots,m_r)\in \Zr$, we set
\begin{eqnarray*}
\x^\m=x_1^{m_1}\cdots x_r^{m_r}.
\end{eqnarray*}
As a convention, we write
\begin{eqnarray*}
\x^{-1}=x_1^{-1}\cdots x_r^{-1}, && \x^{\m-1}=x_1^{m_1-1}\cdots x_r^{m_r-1},
\end{eqnarray*}
and
\begin{eqnarray*}
\Res_\x=\Res_{x_1}\cdots \Res_{x_r}.
\end{eqnarray*}
For a vector space $W$ and a positive integer $N$, we set
\begin{eqnarray}
\mathcal{E}(W,r;N)=\Hom ( W, W[[\varr{x}]](( x_0^{\frac{1}{N}}))).
\end{eqnarray}
In particular, we write $\ESall{W}$ for $\ESalltwistedtemp{W}$, i.e.,
\begin{eqnarray}
\ESall{W}=\Hom (W, W[[\varr{x}]](( x_0))).
\end{eqnarray}

Recall the formal delta-function
$$\delta(z)=\sum_{n\in\mathbb{Z}}z^{n}\in \C[[z,z^{-1}]].$$
We have
 \begin{eqnarray*}
\delta\left(\frac{z_1-z_2}{z_{0}}\right)=\sum_{n\in\mathbb{Z}}z_{0}^{-n}(z_{1}-z_{2})^{n}
=\sum_{n\in\mathbb{Z}}\sum_{i\in\mathbb{N}}(-1)^{i}\choice{n}{i}z_{0}^{-n}z_{1}^{n-i}z_{2}^{i}.
\end{eqnarray*}
The following is one of the basic properties of the delta-function
\begin{eqnarray}\label{eq:delta.}
z_{0}^{-1}\delta\left(\frac{z_1-z_2}{z_{0}}\right)\left(\frac{z_1-z_2}{z_{0}}\right)^{\alpha}
=z_{1}^{-1}\delta\left(\frac{z_0+z_2}{z_{1}}\right)\left(\frac{z_0+z_2}{z_{1}}\right)^{-\alpha}
\end{eqnarray}
for $\alpha\in\C$ (see \cite{DL}).

Next, we recall the definition of an \rtva{} from \cite{LTW}.

\begin{de}
An {\em \rtva{}} is a vector space $V$ equipped with a linear map
\begin{eqnarray*}
\Y{\cdot}{x_0}{\x}: &&V\rightarrow \ESall{V},\\
                    &&v\mapsto \Y{v}{x_0}{\x}=\sum_{(m_0,\m)\in \Z\times \Z^{r}}v_{\suball{m_0}{\m}}x_0^{-m_0-1}\x^{-\m}
\end{eqnarray*}
and equipped with a distinguished vector $\vac\in V$, satisfying the conditions that
\begin{eqnarray*}
\Y{\vac}{x_0}{\x}v=v&\andtext & \Y{v}{x_0}{\x}\vac\in V[[ x_0,\varr{x}]]\quad\fortext v\in V
\end{eqnarray*}
and that for $u,v\in V$,
\begin{eqnarray}\label{ejacobi-rva}
&&\dfunc{z_0}{x_0-y_0}{z_0}\Y{u}{x_0}{\z\y}\Y{v}{y_0}{\y}-\dfunc{z_0}{y_0-x_0}{-z_0}\Y{v}{y_0}{\y}\Y{u}{x_0}{\z\y} \nonumber \\
&&\hspace{2cm}=\dfunc{y_0}{x_0-z_0}{y_0}\Y{\Y{u}{z_0}{\z}v}{y_0}{\y},
\end{eqnarray}
where
\begin{eqnarray*}
\Y{u}{x_0}{\z\y}=\sum_{(m_0,\m)\in \Z\times \Z^{r}}u_{\suball{m_0}{\m}}x_0^{-m_0-1}\z^{-\m}\y^{-\m}.
\end{eqnarray*}
\end{de}

Let $u\in V$. We define $Y(u;m_0,\x)$ and $Y(u;x_0,\m)$ for  $m_0\in \Z,\ \m\in \Z^r$ by
\begin{eqnarray}\label{edef-summand}
Y(u;x_0,\x)=\sum_{m_0\in \Z}Y(u;m_{0},\x)x_0^{-m_0-1}=\sum_{\m\in \Z^{r}}Y(u;x_{0},\mathbf{m})\x^{-\m}.
\end{eqnarray}
From the Jacobi identity (\ref{ejacobi-rva}) we get
\begin{eqnarray}\label{ecommutator-rva}
&&[Y(u;x_{0},\z\y), Y(v,y_{0},\y)]\nonumber\\
&=&\Res_{z_0}\sum_{j\ge 0}\frac{1}{j!}\left(\frac{\partial}{\partial y_{0}}\right)^{j}
\left(x_{0}^{-1}\delta\left(\frac{y_{0}}{x_{0}}\right)\right)z_0^{j}Y(Y(u;z_0,\z)v;y_{0},\y)\nonumber\\
&=&\sum_{j\ge 0}\frac{1}{j!}\left(\frac{\partial}{\partial y_{0}}\right)^{j}
\left(x_{0}^{-1}\delta\left(\frac{y_{0}}{x_{0}}\right)\right)Y(Y(u;j,\z)v;y_{0},\y).
\end{eqnarray}
Furthermore, we have
\begin{eqnarray}\label{ecommutator-rva-m}
\left[ Y(u;x_0,\m),Y(v;y_0,\y) \right]=\y^\m\sum\limits_{j\ge 0}
Y(u_{j,\m}v;y_0,\y)\frac{1}{j!}
\left(\frac{\partial}{\partial y_0} \right)^j \dfunc{x_0}{y_0}{x_0}
\end{eqnarray}
for $\m\in \Z^r$.

Let $V_1$ and $V_2$ be \rtva{}s. An \rtva{} {\em homomorphism} from $V_1$ to $V_2$ is a linear map $\sigma$ such that
\begin{eqnarray*}
\sigma(\mathbf{1})=\mathbf{1}\   \mbox{ and } \   \sigma(Y(u;x_0,\x)v)=Y(\sigma(u);x_0,\x)\sigma(v)
\   \   \   \mbox{ for }u,v\in V_1.
\end{eqnarray*}
An {\em automorphism} of an \rtva{} $V$ is defined to be a bijective homomorphism from $V$ to $V$.

Next, we define a notion of twisted module for an \rtva{} $V$.
Let $\sigma$ be a finite order automorphism of $V$ and let $N$ be a period of  $\sigma$.
(Here, $N$ is a positive integer such that $\sigma^{N}=1$, but $N$ is not necessarily the order of $\sigma$.) Set
$$\rtu{N}=\exp \left( 2\pi\sqrt{ -1}/N \right),$$
 the principal primitive $N$-th root of unity.
Then $V=\oplus_{s=0}^{N-1}V^s$, where $V^j=\set{u\in V}{\sigma (u)=\rtu{N}^{j} u}$ for any $j\in \Z$.

\begin{de}
A {\em $\sigma$-twisted $V$-module} is a vector space $W$ equipped with a linear map
\begin{eqnarray*}
\Ymod{W}{\cdot}{x_0}{\x}:&& V\rightarrow \mathcal{E}(W,r;N)\\
                        &&  v\mapsto \Ymod{W}{v}{x_0}{\x}=\sum_{(m_0,\m)\in\frac{1}{N}\mathbb{Z}\times{\mathbb{Z}}^{r}}v_{\suball{m_0}{\m}}x_0^{-m_0-1}\x^{-\m},
\end{eqnarray*}
such that
\begin{eqnarray*}
\Ymod{W}{\vac}{x_0}{\x}=1_W
\end{eqnarray*}
and such that the following $\sigma$-twisted Jacobi identity holds for $u,v\in V$:
\begin{eqnarray}\label{eq:twistedJacobian}
&&\dfunc{z_0}{x_0-y_0}{z_0}\Ymod{W}{u}{x_0}{\z\y}\Ymod{W}{v}{y_0}{\y}-\dfunc{z_0}{y_0-x_0}{-z_0}\Ymod{W}{v}{y_0}{\y}\Ymod{W}{u}{x_0}{\z\y}\nonumber\\
&&\  \  \  \
\quad=\frac{1}{N}\sum\limits_{j=0}^{N-1} y_0^{-1}\delta\left(  \rtu{N}^j \left(\frac{x_0-z_0}{y_0}\right)^{\frac{1}{N}}     \right) \Ymod{W}{\Y{\sigma ^j u}{z_0}{\z}v}{y_0}{\y}.
\end{eqnarray}
\end{de}

\begin{lem}\label{ehom-power}
Let $(W,Y_{W})$ be a $\sigma$-twisted $V$-module and let $u,v\in V$. Assume $u\in V^s$ with $0\leq s<N$.
Then
\begin{eqnarray}\label{eq:VertexOpSpaceHomo}
\Ymod{W}{u}{x_0}{\x}\in x_{0}^{-\frac{s}{N}}\ESall{W}.
\end{eqnarray}
\end{lem}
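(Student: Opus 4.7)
The plan is to use the $\Z_{N}$-grading on $\mathcal{E}(W,r;N)$ introduced just before the lemma. Let $\phi$ denote the algebra automorphism of the ambient formal-series space induced by the substitution $x_0^{1/N}\mapsto\rtu{N}^{-1}x_0^{1/N}$; then $\phi$ acts as the scalar $\rtu{N}^{s}$ on the subspace $\mathcal{E}(W,r;N)_{s}=x_0^{-s/N}\ESall{W}$, so the lemma is equivalent to proving the eigenvector identity $\phi\,\Ymod{W}{u}{x_0}{\x}=\rtu{N}^{s}\Ymod{W}{u}{x_0}{\x}$.

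To establish this I would apply $\phi$ to both sides of the twisted Jacobi identity \eqref{eq:twistedJacobian}. On the left side only $\Ymod{W}{u}{x_0}{\z\y}$ is affected, because the two delta functions involve only integer powers of $x_0$ and $\Ymod{W}{v}{y_0}{\y}$ carries no $x_0$. On the right side, expanding the fractional-power delta as $\delta(\rtu{N}^{j}((x_0-z_0)/y_0)^{1/N})=\sum_{n\in\Z}\rtu{N}^{jn}((x_0-z_0)/y_0)^{n/N}$ and using $\phi(x_0-z_0)^{n/N}=\rtu{N}^{-n}(x_0-z_0)^{n/N}$ yields the clean index shift $\phi\,\delta(\rtu{N}^{j}(\cdot)^{1/N})=\delta(\rtu{N}^{j-1}(\cdot)^{1/N})$. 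Reindexing $j\mapsto j+1$ and using $\sigma^{j+1}u=\rtu{N}^{s}\sigma^{j}u$ (since $u\in V^{s}$) multiplies the right side by $\rtu{N}^{s}$, so the left side transforms the same way.

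Subtracting the two forms of the identity, the operator $G:=(\phi-\rtu{N}^{s})\Ymod{W}{u}{x_0}{\z\y}$ satisfies the Jacobi-type identity with $G$ in place of $\Ymod{W}{u}{x_0}{\z\y}$ and right side equal to zero. Specializing $v=\vac$ so that $\Ymod{W}{\vac}{y_0}{\y}=1_{W}$ and invoking the standard three-delta identity $\dfunc{z_0}{x_0-y_0}{z_0}-\dfunc{z_0}{y_0-x_0}{-z_0}=\dfunc{y_0}{x_0-z_0}{y_0}$ collapses the relation to $\dfunc{y_0}{x_0-z_0}{y_0}\cdot G=0$. Since $G$ is independent of $y_0$ and $\Res_{y_0}\bigl[\dfunc{y_0}{x_0-z_0}{y_0}\bigr]=1$, taking $\Res_{y_0}$ forces $G=0$, yielding the eigenvector relation and hence \eqref{eq:VertexOpSpaceHomo}.

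The main technical point will be verifying the index-shift identity $\phi\,\delta(\rtu{N}^{j}(\cdot)^{1/N})=\delta(\rtu{N}^{j-1}(\cdot)^{1/N})$, which requires careful bookkeeping of the fractional-power expansions in the ambient formal-series algebra; once this is in hand, the remainder of the proof is a routine application of the vacuum axiom and formal delta-function calculus.
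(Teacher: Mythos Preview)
Your argument is correct and takes a genuinely different route from the paper's proof. The paper first uses $\sigma^{j}u=\rtu{N}^{js}u$ to collapse the sum on the right of \eqref{eq:twistedJacobian} into the single-term form
\[
y_0^{-1}\delta\!\left(\frac{x_0-z_0}{y_0}\right)\left(\frac{x_0-z_0}{y_0}\right)^{-s/N}Y_W(Y(u;z_0,\z)v;y_0,\y),
\]
then specializes $v=\vac$, applies the three-term delta identity on the left, and sets $z_0=0$; the resulting relation exhibits explicitly that only the powers $x_0^{-s/N+\Z}$ occur. Your approach instead exploits the substitution symmetry $x_0^{1/N}\mapsto\rtu{N}^{-1}x_0^{1/N}$ of the full twisted Jacobi identity to show directly that $Y_W(u;x_0,\cdot)$ is a $\phi$-eigenvector of eigenvalue $\rtu{N}^{s}$. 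The paper's route has the advantage that its intermediate simplification (the identities labeled (2.6)--(2.7) there) is reused immediately afterwards to derive the twisted commutator and iterate formulas, so the computation is not wasted; your symmetry argument is tidier for the lemma itself but does not yield those byproducts. One small quibble: the $\Z_N$-grading on $\mathcal{E}(W,r;N)$ that you invoke is actually introduced in Section~3 rather than before this lemma, so if you adopt this proof you should define $\phi$ explicitly here.
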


\begin{proof} As $u\in V^s$, from (\ref{eq:twistedJacobian}) we get
\begin{eqnarray}\label{eq:twistedJacobianHomo}
&&\dfunc{z_0}{x_0-y_0}{z_0}\Ymod{W}{u}{x_0}{\z\y}\Ymod{W}{v}{y_0}{\y}-\dfunc{z_0}{y_0-x_0}{-z_0}\Ymod{W}{v}{y_0}{\y}\Ymod{W}{u}{x_0}{\z\y}\nonumber\\
&&\   \   \  \   \quad= \dfunc{y_0}{x_0-z_0}{y_0}\left( \frac{x_0-z_0}{y_0} \right)^{-\frac{s}{N}} \Ymod{W}{\Y{u}{z_0}{\z}v}{y_0}{\y}.
\end{eqnarray}
Using (\ref{eq:delta.}),  one can also write (\ref{eq:twistedJacobianHomo})  as
\begin{eqnarray}\label{eq:twistedJacobianHomo1}
&&\dfunc{z_0}{x_0-y_0}{z_0}\Ymod{W}{u}{x_0}{\z\y}\Ymod{W}{v}{y_0}{\y}-\dfunc{z_0}{y_0-x_0}{-z_0}\Ymod{W}{v}{y_0}{\y}\Ymod{W}{u}{x_0}{\z\y}\nonumber\\
&&\  \  \   \   \quad= \dfunc{x_0}{y_0+z_0}{x_0}\left( \frac{y_0+z_0}{x_0} \right)^{\frac{s}{N}} \Ymod{W}{\Y{u}{z_0}{\z}v}{y_0}{\y}.
\end{eqnarray}
Taking $v=\vac$ in (\ref{eq:twistedJacobianHomo}), we get
\begin{eqnarray*}
&&\dfunc{y_0}{x_0-z_0}{y_0}\Ymod{W}{u}{x_0}{\z\y}\\
&=&\dfunc{y_0}{x_0-z_0}{y_0}\left( \frac{x_0-z_0}{y_0} \right)^{-\frac{s}{N}} \Ymod{W}{\Y{u}{z_0}{\z}\vac}{y_0}{\y}.
\end{eqnarray*}
Then applying $\mbox{Res}_{z_{0}}z_{0}^{-1}$ (or setting $z_0=0$) we have
\begin{eqnarray*}
y_{0}^{-1}\delta\left(\frac{x_{0}}{y_{0}}\right)\Ymod{W}{u}{x_0}{\z\y}=
y_{0}^{-1}\delta\left(\frac{x_{0}}{y_{0}}\right)\left(\frac{x_{0}}{y_{0}}\right)^{-\frac{s}{N}}
\sum_{\mathbf{n}\in\mathbb{Z}^{r}}Y_{W}(u_{-1,\mathbf{n}}\mathbf{1};y_{0},\y){\z}^{-\mathbf{n}}.
\end{eqnarray*}
From this we get (\ref{eq:VertexOpSpaceHomo}).
\end{proof}

Let $(W,Y_{W})$ be a $\sigma$-twisted $V$-module and let $u\in V^s,\ v\in V$ as in Lemma \ref{ehom-power}.
Applying $\mbox{Res}_{z_{0}}$ to  (\ref{eq:twistedJacobianHomo1}), we obtain the following twisted commutator formula
\begin{eqnarray}\label{eq:commutator1}
&&[Y_{W}(u;x_{0},\z\y), Y_{W}(v,y_{0},\y)]\nonumber\\
&=&\Res_{z_0}\sum_{j\ge 0}\frac{1}{j!}\left(\frac{\partial}{\partial y_{0}}\right)^{j}
\left(x_{0}^{-1}\delta\left(\frac{y_{0}}{x_{0}}\right)\left(\frac{y_{0}}{x_{0}}\right)^{\frac{s}{N}}\right)
z_0^{j}Y_{W}(Y(u;z_0,\z)v;y_{0},\y)\  \  \  \  \  \  \nonumber\\
&=&\sum_{j\ge 0}\frac{1}{j!}\left(\frac{\partial}{\partial y_{0}}\right)^{j}
\left(x_{0}^{-1}\delta\left(\frac{y_{0}}{x_{0}}\right)\left(\frac{y_{0}}{x_{0}}\right)^{\frac{s}{N}}\right)
Y_{W}(Y(u;j,\z)v;y_{0},\y).
\end{eqnarray}
Furthermore, we have
\begin{eqnarray}\label{eq:commutator2}
&&[Y_{W}(u;x_{0},\mathbf{m}), Y_{W}(v,y_{0},\y)]\nonumber\\
&=&{\y}^{\mathbf{m}}\sum_{j\ge 0}\frac{1}{j!}\left(\frac{\partial}{\partial y_{0}}\right)^{j}
\left(x_{0}^{-1}\delta\left(\frac{y_{0}}{x_{0}}\right)\left(\frac{y_{0}}{x_{0}}\right)^{\frac{s}{N}}\right)
Y_{W}(u_{j,\mathbf{m}}v;y_{0},\y)
\end{eqnarray}
for $\m\in \Z^r$, where
\begin{eqnarray*}
Y_{W}(u;x_0,\x)=\sum_{\m\in \Z^{r}}Y_{W}(u;x_{0},\mathbf{m})\x^{-\m}.
\end{eqnarray*}
Multiplying (\ref{eq:twistedJacobianHomo}) by $\left(\frac{x_{0}-z_{0}}{y_{0}}\right)^{\frac{s}{N}}$,
 then applying $\mbox{Res}_{x_{0}}$, we get  a twisted iterate formula
\begin{eqnarray}\label{eq:iterate}
Y_{W}(Y(u;z_{0},\z)v;y_{0},\y)=\mbox{Res}_{x_{0}}\left(\frac{x_{0}-z_{0}}{y_{0}}\right)^{\frac{s}{N}}\cdot X,
\end{eqnarray}
where \begin{eqnarray*}
X=\dfunc{z_0}{x_0-y_0}{z_0}\Ymod{W}{u}{x_0}{\z\y}\Ymod{W}{v}{y_0}{\y}-\dfunc{z_0}{y_0-x_0}{-z_0}\Ymod{W}{v}{y_0}{\y}\Ymod{W}{u}{x_0}{\z\y}.
\end{eqnarray*}
If we use (\ref{eq:twistedJacobianHomo1}),  similarly we get the following variation
\begin{eqnarray}\label{eq:iterate1}
Y_{W}(Y(u;z_{0},\z)v;y_{0},\y)=\mbox{Res}_{x_{0}}\left(\frac{y_{0}+z_{0}}{x_{0}}\right)^{-\frac{s}{N}}\cdot X,
\end{eqnarray}
where $X$ is given as above.

\begin{lem}\label{lJ-weak}
Let $(W,Y_{W})$ be a $\sigma$-twisted $V$-module. (a) For $u,v\in V$, there exists a nonnegative integer $k$ such that
\begin{eqnarray}\label{eq:WeakCom}
\left(x_0-y_0\right)^k \left[\Ymod{W}{u}{x_0}{\x},\Ymod{W}{v}{y_0}{\y}\right]=0.
\end{eqnarray}
(b)  For $u\in V^s$, $v\in V$, $w\in W$, there exists a nonnegative integer $\ell$ such that
\begin{eqnarray}\label{eq:WeakAsso}
&&\left(z_0+y_0\right)^{\ell+\frac{s}{N}}\Ymod{W}{u}{z_0+y_0}{\z\y}\Ymod{W}{v}{y_0}{\y}w\nonumber\\
&=&\left(y_0+z_0\right)^{\ell+\frac{s}{N}} \Ymod{W}{\Y{u}{z_0}{\z}v}{y_0}{\y}w.
\end{eqnarray}
\end{lem}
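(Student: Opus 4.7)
The plan is to derive both weak commutativity and weak associativity from the twisted Jacobi identity (\ref{eq:twistedJacobianHomo1}) by taking formal residues after multiplying by carefully chosen polynomial factors. By linearity in $u$, I reduce to the case $u\in V^s$ for some $0\le s<N$, since any $k$ (resp.\ $\ell$) that works for each homogeneous component works for a sum.

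For part (a), I multiply both sides of (\ref{eq:twistedJacobianHomo1}) by $(x_0-y_0)^k$. On the left, the identity $(x_0-y_0)\dfunc{z_0}{x_0-y_0}{z_0} = \delta\bigl(\frac{x_0-y_0}{z_0}\bigr)$ converts each $z_0^{-1}\delta$ factor into $z_0^{k-1}\delta$. On the right, the delta-substitution $(x_0-y_0)^k \dfunc{x_0}{y_0+z_0}{x_0} = z_0^k \dfunc{x_0}{y_0+z_0}{x_0}$ produces a factor $z_0^k$. Since $\Y{u}{z_0}{\z}v\in V[[\z^{\pm 1}]]((z_0))$, I choose $k\ge 0$ large enough that $z_0^k\Y{u}{z_0}{\z}v$ has only non-negative integer powers of $z_0$. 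The remaining right-hand factors $\dfunc{x_0}{y_0+z_0}{x_0}$ and $\bigl(\frac{y_0+z_0}{x_0}\bigr)^{s/N}$ are each expanded in non-negative powers of $z_0$ (by the convention inherited from (\ref{eq:delta.})), so the full right side has only non-negative powers of $z_0$ and is annihilated by $\Res_{z_0}$. On the left, $\Res_{z_0}[z_0^{k-1}\delta(\frac{x_0-y_0}{z_0})] = (x_0-y_0)^k$, yielding $(x_0-y_0)^k[\Ymod{W}{u}{x_0}{\z\y},\Ymod{W}{v}{y_0}{\y}]=0$. Extracting the coefficient of $\z^{-\m}$ for each $\m\in\Z^r$ gives $(x_0-y_0)^k[\Ymod{W}{u}{x_0}{\m},\Ymod{W}{v}{y_0}{\y}]=0$, and summing back against $\x^{-\m}$ replaces $\z\y$ by the independent variable $\x$, establishing (\ref{eq:WeakCom}).

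For part (b), I apply (\ref{eq:twistedJacobianHomo1}) to $w$ and multiply by $x_0^{\ell+s/N}$. Writing $\Ymod{W}{u}{x_0}{\z\y}=x_0^{-s/N}A(x_0,\z\y)$ with $A(x_0,\z\y)\in\ESall{W}$ by Lemma \ref{ehom-power}, I choose $\ell\ge 0$ large enough that $x_0^{\ell}A(x_0,\z\y)w$ has only non-negative integer powers of $x_0$. Then the left-hand second summand $x_0^{\ell+s/N}\dfunc{z_0}{y_0-x_0}{-z_0}\Ymod{W}{v}{y_0}{\y}\Ymod{W}{u}{x_0}{\z\y}w$ has only non-negative powers of $x_0$ (that delta expands in non-negative $x_0$, and $\Ymod{W}{v}{y_0}{\y}$ is independent of $x_0$), so $\Res_{x_0}$ annihilates it. For the left-hand first summand, the identity $\Res_{x_0}[\dfunc{z_0}{x_0-y_0}{z_0}f(x_0)]=f(y_0+z_0)$ (valid for $f$ with non-negative integer powers of $x_0$, producing the substitution expanded in non-negative powers of $y_0$) yields $(z_0+y_0)^{\ell+s/N}\Ymod{W}{u}{z_0+y_0}{\z\y}\Ymod{W}{v}{y_0}{\y}w$. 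On the right, the direct computation $\Res_{x_0}[x_0^{\ell+s/N-1}\delta(\frac{y_0+z_0}{x_0})(\frac{y_0+z_0}{x_0})^{s/N}]=(y_0+z_0)^{\ell+s/N}$ (with $(y_0+z_0)^{s/N}$ expanded in non-negative powers of $z_0$) produces $(y_0+z_0)^{\ell+s/N}\Ymod{W}{\Y{u}{z_0}{\z}v}{y_0}{\y}w$, establishing (\ref{eq:WeakAsso}).

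The main technical obstacle will be bookkeeping the expansion conventions for the fractional powers: verifying that $(\frac{y_0+z_0}{x_0})^{s/N}$ in (\ref{eq:twistedJacobianHomo1}) is expanded in non-negative powers of $z_0$ (so that $\Res_{z_0}$ vanishes in part (a)), and that the same convention naturally produces the asymmetric expansions $(z_0+y_0)^{\ell+s/N}$ on the LHS (non-negative in $y_0$, inherited from $\dfunc{z_0}{x_0-y_0}{z_0}$) and $(y_0+z_0)^{\ell+s/N}$ on the RHS (non-negative in $z_0$, inherited from $\dfunc{x_0}{y_0+z_0}{x_0}$) in part (b), as required by the statement.
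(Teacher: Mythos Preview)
Your argument for part (a) is correct and is essentially the paper's: taking $\Res_{z_0}$ of the Jacobi identity to obtain the commutator formula and then killing it with $(x_0-y_0)^k$ are the same two operations in either order.

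For part (b) there is a genuine gap. You invoke the substitution identity
\[
\Res_{x_0}\Bigl[z_0^{-1}\delta\Bigl(\tfrac{x_0-y_0}{z_0}\Bigr)f(x_0)\Bigr]=f(z_0+y_0)
\]
stating it is ``valid for $f$ with non-negative integer powers of $x_0$'', and you apply it with
$f(x_0)=x_0^{\ell+s/N}\Ymod{W}{u}{x_0}{\z\y}\Ymod{W}{v}{y_0}{\y}w$. But this $f$ does \emph{not} have only non-negative powers of $x_0$: your $\ell$ was chosen to truncate $\Ymod{W}{u}{x_0}{\cdot}$ on the single vector $w$, whereas in the first term $\Ymod{W}{u}{x_0}{\cdot}$ acts on $\Ymod{W}{v}{y_0}{\y}w$, an infinite $y_0$-series of vectors, and the $x_0$-truncation is not uniform over those coefficients. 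The substitution identity does in fact extend to this situation (each coefficient of the result is a finite sum because the $y_0$-degree of $\Ymod{W}{v}{y_0}{\y}w$ is bounded below), but that requires an argument you have not supplied; as written you are applying a lemma whose stated hypothesis fails.

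The paper's route for (b) is different from yours and is arranged precisely to avoid this issue. It first uses part (a) to rewrite
\[
(x_0-y_0)^{k}x_0^{l+s/N}\Ymod{W}{u}{x_0}{\z\y}\Ymod{W}{v}{y_0}{\y}w
=(x_0-y_0)^{k}x_0^{l+s/N}\Ymod{W}{v}{y_0}{\y}\Ymod{W}{u}{x_0}{\z\y}w,
\]
and the right-hand side \emph{is} a power series in $x_0$ (now $\Ymod{W}{u}{x_0}{\cdot}$ hits $w$ directly). That makes the two substitutions $x_0\to y_0+z_0$ and $x_0\to z_0+y_0$ literally equal, after which the iterate formula (\ref{eq:iterate1}) and cancellation of $z_0^{k}$ finish the proof. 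So your direct residue method is shorter in spirit but owes an additional convergence check, while the paper's detour through weak commutativity buys an honest polynomial in $x_0$ and hence an unambiguous substitution.
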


\begin{proof}
For part (a), let $k\in \N$ be such that $z_0^{k}Y(u;z_0,\z)v\in V[[z_0,\z,\z^{-1}]]$.
Then the assertion follows from (\ref{eq:commutator1}) immediately.

To prove part (b), let $k\in \N$ be such that (\ref{eq:WeakCom}) holds and let $l\in \N$ be such that
$$x_{0}^{l+\frac{s}{N}}Y_{W}(u;x_{0},\z\y)w\in W[[x_{0},\y,{\y}^{-1},\z,{\z}^{-1}]].$$
Then

\begin{eqnarray*}
x_{0}^{l+\frac{s}{N}}(x_{0}-y_{0})^{k}Y_{W}(u;x_{0},\z\y)Y_{W}(v;y_{0},\y)w
=x_{0}^{l+\frac{s}{N}}(x_{0}-y_{0})^{k}Y_{W}(v;y_{0},\y)Y_{W}(u;x_{0},\z\y)w
\end{eqnarray*}
and
\begin{eqnarray*}
x_{0}^{l+\frac{s}{N}}(x_{0}-y_{0})^{k}Y_{W}(v;y_{0},\y)Y_{W}(u;x_{0},\z\y)w
\in W[[x_{0},y_0^{\pm 1},\y,{\y}^{-1},\z,{\z}^{-1}]],
\end{eqnarray*}
which imply
\begin{eqnarray*}
x_{0}^{l+\frac{s}{N}}(x_{0}-y_{0})^{k}Y_{W}(u;x_{0},\z\y)Y_{W}(v;y_{0},\y)w
\in W[[x_{0},y_0^{\pm 1}, \y,{\y}^{-1},\z,{\z}^{-1}]].
\end{eqnarray*}
In view of this, we have
\begin{eqnarray*}
&&\left[(x_{0}-y_{0})^{k}x_{0}^{l+\frac{s}{N}}Y_{W}(u;x_{0},\z\y)Y_{W}(v;y_{0},\y)w\right]|_{x_{0}=y_0+z_0}\nonumber\\
&=&\left[(x_{0}-y_{0})^{k}x_{0}^{l+\frac{s}{N}}Y_{W}(u;x_{0},\z\y)Y_{W}(v;y_{0},\y)w\right]|_{x_{0}=z_0+y_0}.
\end{eqnarray*}
Using (\ref{eq:iterate1}), delta-function substitution, and (\ref{eq:WeakCom}), we obtain
\begin{eqnarray*}
&&z_{0}^{k}(y_{0}+z_{0})^{l+\frac{s}{N}}Y_{W}(Y(u;z_{0},\z)v;y_{0},\y)w\nonumber\\
&=&\Res_{x_{0}}x_{0}^{-1}\delta\left(\frac{y_{0}+z_{0}}{x_{0}}\right)\left(\frac{y_{0}+z_{0}}{x_{0}}\right)
^{-\frac{s}{N}}(y_{0}+z_{0})^{l+\frac{s}{N}}\\
&&\    \cdot \left[(x_{0}-y_{0})^{k}Y_{W}(u;x_{0},\z\y)Y_{W}(v;y_{0},\y)w\right]
\nonumber\\
&=&\Res_{x_{0}}x_{0}^{-1}\delta\left(\frac{y_{0}+z_{0}}{x_{0}}\right)
\left[(x_{0}-y_{0})^{k}x_{0}^{l+\frac{s}{N}}Y_{W}(u;x_{0},\z\y)Y_{W}(v;y_{0},\y)w\right]\nonumber\\
&=&\left[(x_{0}-y_{0})^{k}x_{0}^{l+\frac{s}{N}}Y_{W}(u;x_{0},\z\y)Y_{W}(v;y_{0},\y)w\right]|_{x_{0}=y_0+z_0}\nonumber\\
&=&\left[(x_{0}-y_{0})^{k}x_{0}^{l+\frac{s}{N}}Y_{W}(u;x_{0},\z\y)Y_{W}(v;y_{0},\y)w\right]|_{x_{0}=z_0+y_0}\nonumber\\
&=&z_{0}^{k}(z_{0}+y_{0})^{l+\frac{s}{N}}Y_{W}(u;z_{0}+y_{0},\z\y)Y_{W}(v;y_{0},\y)w.\nonumber
\end{eqnarray*}
Multiplying both sides by $z_{0}^{-k}$, we obtain (\ref{eq:WeakAsso}).
\end{proof}

The property (a) in Lemma \ref{lJ-weak} is called {\em weak commutativity,} while property (b)
 is called {\em weak twisted associativity.}
Just as with twisted modules for vertex algebras (see \cite{Li2}), the converse of Lemma \ref{lJ-weak} also holds.

\begin{lem}\label{lem:JacobianComAsso}
Let $V$ be an \rtva{} and let $\sigma$ be an automorphism of period  $N$. In the definition of
a $\sigma$-twisted $V$-module, the twisted Jacobi identity can be equivalently replaced
by the weak commutativity and the twisted weak associativity.
\end{lem}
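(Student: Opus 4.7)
The plan is to establish the converse direction: given a linear map $Y_W$ satisfying $Y_W(\vac;x_0,\x)=1_W$, weak commutativity, and weak twisted associativity, I will derive the twisted Jacobi identity (\ref{eq:twistedJacobian}). The forward direction is already contained in Lemma \ref{lJ-weak}. By linearity and the grading $V=\bigoplus_{s=0}^{N-1}V^s$, it suffices to verify the identity for $u\in V^s$, in which case the RHS collapses (exactly as in the derivation leading from (\ref{eq:twistedJacobian}) to (\ref{eq:twistedJacobianHomo})) to $y_0^{-1}\delta\left(\tfrac{x_0-z_0}{y_0}\right)\left(\tfrac{x_0-z_0}{y_0}\right)^{-s/N}\Ymod{W}{\Y{u}{z_0}{\z}v}{y_0}{\y}$. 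Thus the target identity for fixed $w\in W$ is (\ref{eq:twistedJacobianHomo}) applied to $w$.

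Fix $u\in V^s$, $v\in V$, $w\in W$, and choose $k\in\N$ so that weak commutativity (\ref{eq:WeakCom}) holds for $u,v$, and $\ell\in\N$ large enough that weak twisted associativity (\ref{eq:WeakAsso}) holds and also $x_0^{\ell+s/N}Y_W(u;x_0,\z\y)w$ involves only nonnegative integer powers of $x_0$. Set
\begin{eqnarray*}
F(x_0,y_0,z_0)=(x_0-y_0)^k\, x_0^{\ell+\frac{s}{N}}\, Y_W(u;x_0,\z\y)Y_W(v;y_0,\y)w.
\end{eqnarray*}
By weak commutativity, $F$ also equals the same expression with the two operators swapped. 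The unswapped form shows that $F$ contains only nonnegative powers of $x_0$ (and $x_0^{1/N}$), while the swapped form shows it contains only nonnegative powers of $y_0$. Hence the substitution $x_0\mapsto y_0+z_0$ is well-defined, and the standard delta substitution principle gives
\begin{eqnarray*}
x_0^{-1}\delta\!\left(\tfrac{y_0+z_0}{x_0}\right) F(x_0,y_0,z_0) = x_0^{-1}\delta\!\left(\tfrac{y_0+z_0}{x_0}\right) F(y_0+z_0,y_0,z_0).
\end{eqnarray*}
By weak twisted associativity, $F(y_0+z_0,y_0,z_0)=z_0^k(y_0+z_0)^{\ell+s/N}Y_W(Y(u;z_0,\z)v;y_0,\y)w$.

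Next I will split the left-hand side using the three-term delta identity $x_0^{-1}\delta\!\left(\tfrac{y_0+z_0}{x_0}\right)=z_0^{-1}\delta\!\left(\tfrac{x_0-y_0}{z_0}\right)-z_0^{-1}\delta\!\left(\tfrac{y_0-x_0}{-z_0}\right)$, so that the two terms pick up the two orderings of operators and recover exactly the left-hand side of (\ref{eq:twistedJacobianHomo}) multiplied by $(x_0-y_0)^k x_0^{\ell+s/N}$. On the other side, use (\ref{eq:delta.}) in the form
\begin{eqnarray*}
x_0^{-1}\delta\!\left(\tfrac{y_0+z_0}{x_0}\right)\left(\tfrac{y_0+z_0}{x_0}\right)^{\ell+\frac{s}{N}} = y_0^{-1}\delta\!\left(\tfrac{x_0-z_0}{y_0}\right)\left(\tfrac{x_0-z_0}{y_0}\right)^{-\ell-\frac{s}{N}}\!\cdot y_0^{\ell+\frac{s}{N}}/x_0^{\ell+\frac{s}{N}}
\end{eqnarray*}
(really just applying (\ref{eq:delta.}) with $\alpha=\ell+s/N$ and rearranging) so that the factor $(y_0+z_0)^{\ell+s/N}$ matches $x_0^{\ell+s/N}$ on the other side up to the desired $\left(\tfrac{x_0-z_0}{y_0}\right)^{-s/N}$ correction. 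The powers $(x_0-y_0)^k$ and $z_0^k$ match under either of the two delta expansions in the three-term identity. Cancelling the common factor $(x_0-y_0)^k x_0^{\ell+s/N}$ (using delta-function support to pass factors through) yields precisely (\ref{eq:twistedJacobianHomo}) applied to $w$.

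The main technical obstacle will be Step 2: one must verify that the fractional power $x_0^{s/N}$ can be bundled with $x_0^{\ell}$ so that the substitution $x_0\mapsto y_0+z_0$ makes sense unambiguously, and that the fractional factor $(y_0+z_0)^{s/N}$ produced on the right-hand side converts cleanly, via (\ref{eq:delta.}), into the factor $\left(\tfrac{x_0-z_0}{y_0}\right)^{-s/N}$ that appears in (\ref{eq:twistedJacobianHomo}). Apart from this bookkeeping with fractional powers (whose integer-power analogue is the classical Li argument for vertex algebras in \cite{Li2}), the remaining steps are formal manipulations of delta functions.
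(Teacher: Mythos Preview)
Your approach is essentially the paper's own proof. One small correction: it is the \emph{swapped} form (where $Y_W(u;x_0,\cdot)$ is applied to $w$ first) that exhibits only nonnegative integer powers of $x_0$ in $F$, not the unswapped form, and the remark about nonnegative $y_0$-powers is both unnecessary and not correct as stated; with that fix your outline---define $F$, identify the two orderings via weak commutativity, substitute $x_0\mapsto y_0+z_0$ under the delta, apply weak associativity, unwind via the three-term identity and (\ref{eq:delta.}), and cancel the monomial prefactor---matches the paper's argument line for line.
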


\begin{proof}
Let $u\in V^s$, $v\in V$ and $w\in W$ with $0\le s<N$. Let $k\in \N$ be
such that (\ref{eq:WeakCom}) and (\ref{eq:WeakAsso}) hold and
such that $x_{0}^{k+\frac{s}{N}}Y_{W}(u;x_{0},\x)w\in W[[x_0,\x,\x^{-1}]]$. Then
\begin{eqnarray*}
(x_0-y_0)^{k}x_{0}^{k+\frac{s}{N}}\Ymod{W}{u}{x_0}{\z\y}\Ymod{W}{v}{y_0}{\y}w
=(x_0-y_0)^{k}x_{0}^{k+\frac{s}{N}}\Ymod{W}{v}{y_0}{\y}\Ymod{W}{u}{x_0}{\z\y}w.
\end{eqnarray*}
As the expression on the right-hand side involves only nonnegative integer powers of $x_0$, so does the expression on the left-hand side. Thus
\begin{eqnarray*}
&&\left[(x_0-y_0)^{k}x_{0}^{k+\frac{s}{N}}\Ymod{W}{u}{x_0}{\z\y}\Ymod{W}{v}{y_0}{\y}w\right]|_{x_0=y_0+z_0}\nonumber\\
&=&\left[(x_0-y_{0})^{k}x_{0}^{k+\frac{s}{N}}
\Ymod{W}{u}{x_0}{\z\y}\Ymod{W}{v}{y_0}{\y}w\right]|_{x_0=z_0+y_0}.
\end{eqnarray*}
Then using delta-function substitution and (\ref{eq:WeakAsso}) we get
\begin{eqnarray}\label{eqtemp:1:lem}
&&z_0^k x_{0}^{k}\dfunc{z_0}{x_0-y_0}{z_0}\Ymod{W}{u}{x_0}{\z\y}\Ymod{W}{v}{y_0}{\y}w\nonumber\\
&&\quad \     -z_0^k x_{0}^{k}\dfunc{z_0}{y_0-x_0}{-z_0}\Ymod{W}{v}{y_0}{\y}\Ymod{W}{u}{x_0}{\z\y}w\nonumber\\
&=&\dfunc{z_0}{x_0-y_0}{z_0}x_{0}^{-\frac{s}{N}}\left[(x_0-y_0)^{k}x_{0}^{k+\frac{s}{N}}\Ymod{W}{u}{x_0}{\z\y}\Ymod{W}{v}{y_0}{\y}w\right]\nonumber\\
&&\quad \   -\dfunc{z_0}{y_0-x_0}{-z_0}x_{0}^{-\frac{s}{N}}
\left[(x_0-y_0)^{k}x_{0}^{k+\frac{s}{N}}\Ymod{W}{v}{y_0}{\y}\Ymod{W}{u}{x_0}{\z\y}w\right]\nonumber\\
&=&\dfunc{y_0}{x_0-z_0}{y_0}x_{0}^{-\frac{s}{N}}
\left[(x_0-y_0)^{k}x_{0}^{k+\frac{s}{N}}\Ymod{W}{u}{x_0}{\z\y}\Ymod{W}{v}{y_0}{\y}w\right]\nonumber\\
&=&\dfunc{x_0}{y_0+z_0}{x_0}x_{0}^{-\frac{s}{N}}\left[(x_0-y_{0})^{k}x_{0}^{k+\frac{s}{N}}
\Ymod{W}{u}{x_0}{\z\y}\Ymod{W}{v}{y_0}{\y}w\right]|_{x_0=y_0+z_0}\nonumber\\
&=&\dfunc{x_0}{y_0+z_0}{x_0}x_{0}^{-\frac{s}{N}}\left[(x_0-y_{0})^{k}x_{0}^{k+\frac{s}{N}}
\Ymod{W}{u}{x_0}{\z\y}\Ymod{W}{v}{y_0}{\y}w\right]|_{x_0=z_0+y_0}\nonumber\\
&=&\dfunc{x_0}{y_0+z_0}{x_0}
x_{0}^{-\frac{s}{N}}z_{0}^{k}(z_{0}+y_{0})^{k+\frac{s}{N}}\Ymod{W}{u}{z_0+y_0}{\z\y}
\Ymod{W}{v}{y_0}{\y}w\nonumber\\
&=&\dfunc{x_0}{y_0+z_0}{x_0}x_{0}^{-\frac{s}{N}}z_{0}^{k}(y_{0}+z_{0})^{k+\frac{s}{N}}Y_{W}(Y(u;z_{0},z)v;y_{0},\y)w
\nonumber\\
&=&x_{0}^{k}z_{0}^{k}\dfunc{x_0}{y_0+z_0}{x_0}\left(\frac{y_0+z_0}{x_0}\right)^{\frac{s}{N}}Y_{W}(Y(u;z_{0},z)v;y_{0},\y)w\nonumber\\
&=&x_{0}^{k}z_{0}^{k}\dfunc{y_0}{x_0-z_0}{y_0}\left(\frac{x_0-z_0}{y_0}\right)^{-\frac{s}{N}}Y_{W}(Y(u;z_{0},z)v;y_{0},\y))w.\nonumber
\end{eqnarray}
Multiplying by $x_{0}^{-k}z_{0}^{-k}$, we obtain the twisted Jacobi identity.
\end{proof}

Using (\ref{eq:commutator2}), Lemma 2.3 in \cite{Li2}, and (\ref{ecommutator-rva-m}), we immediately  have
the following analogue of  one half of Lemma 2.11 therein:

\begin{lem}\label{ltwistedm-rva}
Let $V$ be an  \rtva{} with a finite order automorphism $\sigma$ of period $N$
and let $(W,Y_{W})$ be a faithful $\sigma$-twisted $V$-module. Let
\begin{eqnarray*}
a,b,c^{(0)},c^{(1)},\dots,c^{(k)}\in V,\   \m\in \Zr
\end{eqnarray*}
and assume $a\in V^s$ with $0\le s<N$. If
\begin{eqnarray}\label{ebracket-onmodule}
&&\left[ Y_{W}(a;x_0,\m),Y_{W}(b;y_0,\y)\right]\nonumber\\
&=&\y^\m \sum\limits_{j=0}^k Y_{W}(c^{(j)};y_0,\y)\frac{1}{j!}\left(\frac{\partial}{\partial y_0} \right)^j
\left(\dfunc{x_0}{y_0}{x_0}\left(\frac{y_0}{x_0}\right)^{\frac{s}{N}} \right)
\end{eqnarray}
on $W$, then we have
$$a_{j,\m}b=c^{(j)}\   \   \mbox{ for }0\le j\le k\  \mbox{ and }\  a_{j,\m}b=0\   \   \mbox{ for }j>k,$$
and
\begin{eqnarray}
\left[ Y(a;x_0,\m),Y(b;y_0,\y) \right]=\y^\m\sum\limits_{j=0}^k
Y(c^{(j)};y_0,\y)\frac{1}{j!}
\left(\frac{\partial}{\partial y_0} \right)^j \dfunc{x_0}{y_0}{x_0}
\end{eqnarray}
on $V$.
\end{lem}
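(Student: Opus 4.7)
My plan is to set the given bracket formula against the general twisted commutator formula \eqref{eq:commutator2} and extract coefficients.

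First, I apply \eqref{eq:commutator2} to the commutator $[Y_{W}(a;x_{0},\mathbf{m}),Y_{W}(b;y_{0},\mathbf{y})]$, which, since $a\in V^{s}$, automatically gives
\begin{eqnarray*}
&&[Y_{W}(a;x_{0},\mathbf{m}),Y_{W}(b;y_{0},\mathbf{y})] \\
&=&\mathbf{y}^{\mathbf{m}}\sum_{j\ge 0}\frac{1}{j!}\left(\frac{\partial}{\partial y_{0}}\right)^{j}\!\left(x_{0}^{-1}\delta\!\left(\frac{y_{0}}{x_{0}}\right)\!\left(\frac{y_{0}}{x_{0}}\right)^{\frac{s}{N}}\right)Y_{W}(a_{j,\mathbf{m}}b;y_{0},\mathbf{y}).
\end{eqnarray*}
Subtracting the given expression \eqref{ebracket-onmodule} and setting $c^{(j)}=0$ for $j>k$, I obtain
\begin{eqnarray*}
\sum_{j\ge 0}\frac{1}{j!}\left(\frac{\partial}{\partial y_{0}}\right)^{j}\!\left(x_{0}^{-1}\delta\!\left(\frac{y_{0}}{x_{0}}\right)\!\left(\frac{y_{0}}{x_{0}}\right)^{\frac{s}{N}}\right)Y_{W}\!\left(a_{j,\mathbf{m}}b-c^{(j)};y_{0},\mathbf{y}\right)=0
\end{eqnarray*}
as an identity on $W$ (after multiplying by $\mathbf{y}^{-\mathbf{m}}$).

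Next, I invoke Lemma 2.3 of \cite{Li2}, which guarantees that the family of formal series $\frac{1}{j!}\bigl(\partial/\partial y_{0}\bigr)^{j}\!\bigl(x_{0}^{-1}\delta(y_{0}/x_{0})(y_{0}/x_{0})^{s/N}\bigr)$ ($j\ge 0$) is linearly independent in the relevant sense, so that each coefficient operator $Y_{W}(a_{j,\mathbf{m}}b-c^{(j)};y_{0},\mathbf{y})$ must vanish on $W$. Because $(W,Y_{W})$ is a \emph{faithful} $\sigma$-twisted $V$-module, this forces $a_{j,\mathbf{m}}b=c^{(j)}$ for $0\le j\le k$ and $a_{j,\mathbf{m}}b=0$ for $j>k$.

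Finally, substituting these identifications into the untwisted commutator formula \eqref{ecommutator-rva-m} for $V$ yields
\begin{eqnarray*}
[Y(a;x_{0},\mathbf{m}),Y(b;y_{0},\mathbf{y})]=\mathbf{y}^{\mathbf{m}}\sum_{j=0}^{k}Y(c^{(j)};y_{0},\mathbf{y})\frac{1}{j!}\left(\frac{\partial}{\partial y_{0}}\right)^{j}\!\!\left(x_{0}^{-1}\delta\!\left(\frac{y_{0}}{x_{0}}\right)\right),
\end{eqnarray*}
which is the desired identity on $V$. The only delicate point is the invocation of the linear-independence lemma with the twist factor $(y_{0}/x_{0})^{s/N}$, but this is precisely the content of Lemma 2.3 of \cite{Li2}, so no new work is required; the remainder is a straightforward substitution.
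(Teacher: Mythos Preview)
Your proposal is correct and follows exactly the route indicated in the paper: apply the twisted commutator formula \eqref{eq:commutator2}, invoke Lemma~2.3 of \cite{Li2} together with faithfulness to identify $a_{j,\mathbf{m}}b=c^{(j)}$, and then substitute into \eqref{ecommutator-rva-m}. The paper in fact gives no further detail than this, so your write-up is a faithful expansion of the intended argument.
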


\begin{rem}
{\em Note that for twisted modules for a vertex algebra, the converse of Lemma \ref{ltwistedm-rva} is also true (see \cite{Li2})
as a vertex algebra itself is always a faithful module. However, this is not the case here;
an \rtva{} itself is not necessarily a faithful module.
Due to this,  we cannot claim the converse of Lemma \ref{ltwistedm-rva}. }
\end{rem}

\section{Construction of toroidal vertex algebras and their twisted modules}\label{Section:ConceptionalConstruction}
In this section, we give a general construction of \rtva{}s and their twisted modules
from local subsets of $\mathcal{E}(W,r;N)$ with $W$ being an arbitrary vector space.

Let $W$ be a vector space in addition to positive integers $r$ and $N$, which are all fixed throughout this section.
Recall
\begin{eqnarray*}
\mathcal{E}(W,r;N)=\Hom ( W, W[[\varr{x}]](( x_0^{\frac{1}{N}}))).
\end{eqnarray*}
We equip $\End(W)[[x_0^{\pm \frac{1}{N}},\varr{x}]]$ with the following $\Z_{N}$-grading
\begin{eqnarray}
\End(W)[[x_0^{\pm \frac{1}{N}},\varr{x}]]=\oplus_{[j]\in \Z_{N}}x^{-\frac{j}{N}}\End(W)[[x_0^{\pm 1},\varr{x}]].
\end{eqnarray}
Let $\sigma$ be the corresponding linear automorphism of $\End(W)[[x_0^{\pm
\frac{1}{N}},\varr{x}]]$, with $x^{-\frac{j}{N}}\End(W)[[x_0^{\pm 1},\varr{x}]]$
as the eigenspace of eigenvalue $\omega_{N}^{j}$
for $0\le j<N$. Namely,
\begin{eqnarray}
\sigma(f(x_{0}^{\frac{1}{N}},x_{1},\dots,x_{r}))=f(\omega_{N}^{-1}x_{0}^{\frac{1}{N}},x_{1},\dots,x_{r}).
\end{eqnarray}
We see that  a subspace of $\End(W)[[x_0^{\pm \frac{1}{N}},\varr{x}]]$ is $\sigma$-stable if and only if it is graded.
It is clear that $\mathcal{E}(W,r;N)$ is a graded subspace, so that $\sigma$ is a linear automorphism.
We have
$$\mathcal{E}(W,r;N)=\oplus_{0\le j\le N-1}\mathcal{E}(W,r;N)_{j},$$
where
\begin{eqnarray}
\mathcal{E}(W,r;N)_{j}=x_0^{-\frac{j}{N}}\Hom ( W, W[[\varr{x}]](( x_0)))=x_0^{-\frac{j}{N}}\mathcal{E}(W,r).
\end{eqnarray}

Let $a(x_0,\x),b(x_0,\x)\in  \mathcal{E}(W,r;N)$. We say that $a(x_0,\x)$ and $b(x_0,\x)$ are {\em mutually local}
 if there exists a nonnegative integer $k$ such that
\begin{eqnarray}\label{eab-locality}
(x_0-y_0)^k \left[a(x_0,\x),b(y_0,\y)\right]=0.
\end{eqnarray}
Furthermore, we say a subset $U$ of $\mathcal{E}(W,r;N)$ is {\em local}
if for any $a(x_0,\x),b(x_0,\x)\in U$, $a(x_0,\x)$ and $b(x_0,\x)$ are mutually local.

\begin{de}\label{def:defofYe}
Let $a(x_0,\x),b(x_0,\x)\in  \mathcal{E}(W,r;N)$. Assume that $a(x_0,\x)$ and $b(x_0,\x)$ are mutually local and
assume $a(x_0,\x)\in  \mathcal{E}(W,r;N)_j$
with $0\le j<N$. Then we define
$$a(y_0,\y)_{\suball{m_0}{\m}}b(y_0,\y)\in \mathcal{E}(W,r;N)\  \  \mbox{ for }(m_0,\m)\in \Z\times \Z^r$$
in terms of generating function
\begin{eqnarray*}
\Ye{a(y_0,\y)}{z_0}{\z}b(y_0,\y)=\sum_{(m_0,\m)\in \Z\times \Z^r}a(y_0,\y)_{\suball{m_0}{\m}}b(y_0,\y)z_0^{-m_0-1 }\z^{-\m}
\end{eqnarray*}
by
\begin{eqnarray}\label{eq:defofYe}
&&\Ye{a(y_0,\y)}{z_0}{\z}b(y_0,\y)=\Res_{x_0}\left(\frac{x_0-z_0}{y_0} \right)
^{\frac{j}{N}}\cdot X,
\end{eqnarray}
where
\begin{eqnarray*}
&&X=\dfunc{z_0}{x_0-y_0}{z_0} a(x_0,\z\y)b(y_0,\y)
-\dfunc{z_0}{y_0-x_0}{-z_0}b(y_0,\y)a(x_0,\z\y).
\end{eqnarray*}
\end{de}

\begin{lem}\label{rem:Operators}
Assume that $a(x_0,\x)\in \mathcal{E}(W,r;N)_{j},\ b(x_0,\x)\in \mathcal{E}(W,r;N)_{s}$ with $j,s\in \Z$ and that
$a(x_0,\x)$ and $b(x_0,\x)$ are mutually local.
Then
\begin{eqnarray}\label{eq:grading}
a(y_0,\y)_\suball{m_0}{\m}b(y_0,\y)\in \mathcal{E}(W,r;N)_{j+s}
\end{eqnarray}
for all $(m_0,\m)\in \Z\times \Z^r$ and $a(y_0,\y)_\suball{m_0}{\m}b(y_0,\y)=0$ whenever $m_0\ge k$,
where $k$ is a nonnegative integer such that (\ref{eab-locality}) holds.
\end{lem}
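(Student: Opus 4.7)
The plan is to verify both claims directly from the defining formula of $\Ye{a(y_0,\y)}{z_0}{\z}b(y_0,\y)$, exploiting the locality hypothesis for the truncation statement and the $\sigma$-eigenspace assignment of $a,b$ for the grading statement.

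For the vanishing $a(y_0,\y)_{\suball{m_0}{\m}}b(y_0,\y)=0$ when $m_0\geq k$, I would multiply the defining formula by $z_0^k$. Using the standard expansion
\[
z_0^k\cdot z_0^{-1}\delta\!\left(\frac{x_0-y_0}{z_0}\right)=\sum_{m\in\Z}z_0^{-m-1}(x_0-y_0)^{k+m},
\]
and its analogue for the second delta, the coefficient of $z_0^{-m-1}$ in $z_0^k X$ for $m\geq 0$ collapses (the two polynomial factors $(x_0-y_0)^{k+m}$ match) to $(x_0-y_0)^{k+m}\left[a(x_0,\z\y),b(y_0,\y)\right]=(x_0-y_0)^{m}\cdot(x_0-y_0)^k[a,b]=0$ by locality. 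The prefactor $\left(\frac{x_0-z_0}{y_0}\right)^{j/N}$ expands as $y_0^{-j/N}x_0^{j/N}(1-z_0/x_0)^{j/N}$, contributing only nonnegative integer powers of $z_0$, and $\Res_{x_0}$ acts independently of $z_0$; hence $z_0^k\Ye{a(y_0,\y)}{z_0}{\z}b(y_0,\y)$ has only nonnegative integer powers of $z_0$, proving the vanishing.

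For the graded membership I proceed in two stages. To handle the $\Res_{x_0}$, I write $a(x_0,\x)=x_0^{-j/N}\tilde a(x_0,\x)$ and $b(y_0,\y)=y_0^{-s/N}\tilde b(y_0,\y)$ with $\tilde a,\tilde b\in\mathcal{E}(W,r)$ carrying only integer powers of $x_0$ and $y_0$ respectively. Expanding $(x_0-z_0)^{j/N}=x_0^{j/N}(1-z_0/x_0)^{j/N}$, the factors $x_0^{j/N}$ and $x_0^{-j/N}$ cancel, so the integrand becomes a series with integer $x_0$ powers and $\Res_{x_0}$ reduces to a standard formal residue. What remains outside is $y_0^{-(j+s)/N}$ times an expression in integer $y_0$ powers, and the finite negative truncation of the integer-$y_0$ part when applied to any $w\in W$ is secured by the same $z_0^k$-locality trick used above. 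Second, to pin down the $\sigma$-eigenvalue, I apply $\sigma$ (which acts by $y_0^{1/N}\mapsto\rtu{N}^{-1}y_0^{1/N}$) to the defining formula. The operator $a(x_0,\z\y)$ carries no $y_0$-dependence and is $\sigma$-fixed; the delta functions and $(x_0-z_0)^{j/N}$ involve only integer (or no) powers of $y_0$ and are $\sigma$-fixed; $y_0^{-j/N}$ picks up a factor $\rtu{N}^j$; and $b(y_0,\y)\in\mathcal{E}(W,r;N)_s$ picks up $\rtu{N}^s$. Hence $\sigma\bigl(\Ye{a(y_0,\y)}{z_0}{\z}b(y_0,\y)\bigr)=\rtu{N}^{j+s}\Ye{a(y_0,\y)}{z_0}{\z}b(y_0,\y)$, and passing to coefficients places each $a(y_0,\y)_{\suball{m_0}{\m}}b(y_0,\y)$ in the eigenspace $\mathcal{E}(W,r;N)_{j+s}$.

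The main technical obstacle is the well-definedness of the $\Res_{x_0}$ in the presence of formal fractional powers of $x_0$. The resolution is the cancellation $(x_0-z_0)^{j/N}\cdot x_0^{-j/N}=(1-z_0/x_0)^{j/N}$, which is precisely the reason the prefactor $(x_0-z_0)^{j/N}/y_0^{j/N}$ is built into the definition. Once this observation is in hand, the remainder of the argument is routine formal calculus familiar from the untwisted setting and from \cite{Li2}.
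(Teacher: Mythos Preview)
Your proposal is correct and follows essentially the same route as the paper. For the vanishing, both you and the paper expand the prefactor $(x_0-z_0)^{j/N}=x_0^{j/N}\sum_{i\ge 0}\binom{j/N}{i}(-z_0/x_0)^{i}$ and then use that $\Res_{z_0}z_0^{q}X=(x_0-y_0)^{q}[a,b]=0$ for $q\ge k$; for the grading, the paper simply multiplies through by $y_0^{(j+s)/N}$ and observes that $y_0^{s/N}X$ carries only integer powers of $y_0$, which is exactly your cancellation $x_0^{j/N}\cdot x_0^{-j/N}$ together with $b=y_0^{-s/N}\tilde b$. Your second stage (applying $\sigma$ to read off the eigenvalue) is harmless but redundant: once you have shown the result lies in $y_0^{-(j+s)/N}\mathcal{E}(W,r)$, the $\sigma$-eigenvalue $\omega_N^{j+s}$ is automatic from the definition of the grading.
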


\begin{proof} By definition we have
\begin{eqnarray*}
y_0^{\frac{j+s}{N}}\Ye{a(y_0,\y)}{z_0}{\z}b(y_0,\y)=\Res_{x_0}(x_0-z_0)^{\frac{j}{N}}
\cdot y_0^{\frac{s}{N}}X,
\end{eqnarray*}
where $X$ is given as above. Since $y_0^{\frac{s}{N}}X$ involves only integer powers of $y_0$
(from Lemma \ref{ehom-power}), so does
$y_0^{\frac{j+s}{N}}\Ye{a(y_0,\y)}{z_0}{\z}b(y_0,\y)$. This proves the first assertion.
For the second assertion, note that for $m_0\in \Z$,
\begin{eqnarray*}
z_0^{m_0}\Ye{a(y_0,\y)}{z_0}{\z}b(y_0,\y)=\Res_{x_0}\sum_{i\ge 0}\choice{\frac{j}{N}}{i}(-1)^{i}x_{0}^{\frac{j}{N}-i}
y_0^{-\frac{j}{N}}\cdot \left(z_0^{i+m_0}X\right).
 \end{eqnarray*}
 Let $k\in \N$ be such that (\ref{eab-locality}) holds.
Noticing that for any integer $q$ with $q\ge k$,
$$\Res_{z_0}z_{0}^{q}X=(x_0-y_0)^{q}a(x_0,\z\y)b(y_0,\y)-(x_0-y_0)^{q}b(y_0,\y)a(x_0,\z\y)=0,$$
we have
$$\Res_{z_0}z_{0}^{m_0}\Ye{a(y_0,\y)}{z_0}{\z}b(y_0,\y)=0$$ for any $m_0\ge k$.
Thus, $a(x_0,\x)_{\suball{m_0}{\m}}b(x_0,\x)=0$ for any $(m_0,\m)\in\Z\times \Zr$ with $m_0\ge k$.
\end{proof}

\begin{rem}\label{rdifferent-def}
{\em  Let $a(x_0,\x)\in \mathcal{E}(W,r;N)_{j},\ b(x_0,\x)\in \mathcal{E}(W,r;N)_{s}$.
Suppose that $a(x_0,\x)$ and $b(x_0,\x)$ are mutually local with $k\in \N$ such that (\ref{eab-locality}) holds.
Then
\begin{eqnarray*}
(x_0-y_0)^{k}x_0^{\frac{j}{N}}a(x_0,\z\y)b(y_0,\y)=(x_0-y_0)^{k}x_0^{\frac{j}{N}}b(y_0,\y)a(x_0,\z\y),
\end{eqnarray*}
which (by using the information from two sides) implies
$$(x_0-y_0)^{k}x_0^{\frac{j}{N}}a(x_0,\z\y)b(y_0,\y)\in \Hom (W,W[[\y,\y^{-1},\z,\z^{-1}]]((x_0,y_0))).$$
Using delta-function substitution and (\ref{eab-locality}), we get
\begin{eqnarray}
z_0^{k}X=(x_0-y_0)^{k}X=\dfunc{y_0}{x_0-z_0}{y_0}\left[ (x_0-y_0)^{k}a(x_0,\z\y)b(y_0,\y)\right].
\end{eqnarray}
Then
\begin{eqnarray}
&&z_0^{k}\Ye{a(y_0,\y)}{z_0}{\z}b(y_0,\y)\nonumber\\
&=&\Res_{x_0}\left(\frac{x_0-z_0}{y_0} \right)^{\frac{j}{N}}\cdot z_0^{k}X\nonumber\\
&=&\Res_{x_0}\dfunc{y_0}{x_0-z_0}{y_0}\left(\frac{x_0-z_0}{y_0} \right)
^{\frac{j}{N}}\left[ (x_0-y_0)^{k}a(x_0,\z\y)b(y_0,\y)\right]\nonumber\\
&=&\Res_{x_0}\dfunc{x_0}{y_0+z_0}{x_0}\left(\frac{y_0+z_0}{x_0} \right)
^{-\frac{j}{N}}\left[ (x_0-y_0)^{k}a(x_0,\z\y)b(y_0,\y)\right]\nonumber\\
&=&(y_0+z_0)^{-\frac{j}{N}}\left[ (x_0-y_0)^{k}x_0^{\frac{j}{N}}a(x_0,\z\y)b(y_0,\y)\right]|_{x_0=y_0+z_0}.
\end{eqnarray}
Therefore, we obtain (cf. \cite{LTWqva}, Lemma 2.9)
\begin{eqnarray}\label{eY-different}
z_0^{k}(y_0+z_0)^{\frac{j}{N}}\Ye{a(y_0,\y)}{z_0}{\z}b(y_0,\y)=\left[(x_0-y_0)^{k}x_0^{\frac{j}{N}}a(x_0,\z\y)b(y_0,\y)\right]|_{x_0=y_0+z_0}.
\end{eqnarray}
This gives a different definition of $\Ye{a(y_0,\y)}{z_0}{\z}b(y_0,\y)$.}
\end{rem}

For any graded local subspace $U$ of $ \mathcal{E}(W,r;N)$, we
extend the definition linearly to define $a(y_0,\y)_{\suball{m_0}{\m}}b(y_0,\y)$ for any $a(x_0,\x),b(x_0,\x)\in U$.

The following is a key result to the construction:

\begin{lem}\label{lem:localclosed}
Assume that $a(x_0,\x),b(x_0,\x),c(x_0,\x)\in \mathcal{E}(W,r;N)$ are pairwise local.
Then for any $(m_0,\m)\in \Z\times \Zr$, $a(x_0,\x)_{\suball{m_0}{\m}}b(x_0,\x)$ and $c(x_0,\x)$ are local.
\end{lem}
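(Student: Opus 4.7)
The plan is to reduce locality of $a(y_0,\y)_{m_0,\m}b(y_0,\y)$ with $c(w_0,\rsymbol{w})$ to the pairwise localities of $a,b,c$. Fix $k_1,k_2,k_3\in\N$ with
\begin{equation*}
(x_0-y_0)^{k_1}[a(x_0,\x),b(y_0,\y)]=0,\ (x_0-w_0)^{k_2}[a(x_0,\x),c(w_0,\rsymbol{w})]=0,\ (y_0-w_0)^{k_3}[b(y_0,\y),c(w_0,\rsymbol{w})]=0;
\end{equation*}
the first two relations persist after the substitution $\x\mapsto\z\y$. Assume $a(x_0,\x)\in\mathcal{E}(W,r;N)_j$. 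Expanding the commutator of a product as $a[b,c]+[a,c]b$ and using these yields
\begin{equation*}
(x_0-w_0)^{k_2}(y_0-w_0)^{k_3}\bigl[a(x_0,\z\y)b(y_0,\y),c(w_0,\rsymbol{w})\bigr]=0.
\end{equation*}

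I then invoke identity \eqref{eY-different} of Remark \ref{rdifferent-def} with $k=k_1$, commute both sides with $c(w_0,\rsymbol{w})$ (the scalar factor $z_0^{k_1}(y_0+z_0)^{j/N}$ commutes freely), and multiply by $(y_0+z_0-w_0)^{k_2}(y_0-w_0)^{k_3}$. Under the substitution $x_0=y_0+z_0$, the factor $(y_0+z_0-w_0)$ becomes $(x_0-w_0)$, so the right-hand side is the restriction of $(x_0-y_0)^{k_1}x_0^{j/N}(x_0-w_0)^{k_2}(y_0-w_0)^{k_3}[a(x_0,\z\y)b(y_0,\y),c(w_0,\rsymbol{w})]$ to $x_0=y_0+z_0$, which vanishes by the previous display. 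Thus
\begin{equation*}
z_0^{k_1}(y_0+z_0)^{j/N}(y_0+z_0-w_0)^{k_2}(y_0-w_0)^{k_3}\bigl[\Ye{a(y_0,\y)}{z_0}{\z}b(y_0,\y),c(w_0,\rsymbol{w})\bigr]=0.
\end{equation*}
Since $(y_0+z_0)^{j/N}=y_0^{j/N}(1+z_0/y_0)^{j/N}$ is an invertible formal power series in $z_0$ with coefficients in $\C[y_0^{\pm 1/N}]$, I cancel it to obtain the same identity with that factor removed.

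The remaining step is to expand $(y_0+z_0-w_0)^{k_2}=\sum_{i=0}^{k_2}\binom{k_2}{i}(y_0-w_0)^{i}z_0^{k_2-i}$ and extract the coefficient of $z_0^{-n_0-1}\z^{-\m}$. Using Lemma \ref{rem:Operators}, which forces $a(y_0,\y)_{p,\m}b(y_0,\y)=0$ whenever $p\ge k_1$, the surviving identity reads
\begin{equation*}
\sum_{i=0}^{k_2}\binom{k_2}{i}(y_0-w_0)^{k_3+i}\bigl[a(y_0,\y)_{n_0+k_1+k_2-i,\m}b(y_0,\y),c(w_0,\rsymbol{w})\bigr]=0,
\end{equation*}
a finite sum in which all nonzero terms have mode index at most $k_1-1$. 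Specializing to $n_0=-M-1$ and multiplying through by $(y_0-w_0)^M$, a downward induction on $M\ge0$ (base $M=0$: only the $i=k_2$ term survives, giving $(y_0-w_0)^{k_3+k_2}[a_{k_1-1,\m}b,c(w_0,\rsymbol{w})]=0$) shows that $(y_0-w_0)^{k_3+k_2+M}[a_{k_1-1-M,\m}b,c(w_0,\rsymbol{w})]=0$; for $m_0\ge k_1$ the mode $a_{m_0,\m}b$ is itself zero. Hence $a(y_0,\y)_{m_0,\m}b(y_0,\y)$ and $c(w_0,\rsymbol{w})$ are local. The main technical point is the bookkeeping in the cancellation of the fractional power $(y_0+z_0)^{j/N}$ and in matching exponents of $(y_0-w_0)$ in the induction so that each tail term is exactly absorbed by the induction hypothesis.
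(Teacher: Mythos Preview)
Your argument is correct, but it follows a different route from the paper's. The paper works directly from Definition~\ref{def:defofYe}: it writes $a(x_0,\x)_{m_0,\m}b(x_0,\x)$ as a finite residue sum of terms $T_i$, then uses the single binomial identity $(x_0-z_0)^{3k}=((x_0-y_0)+(y_0-z_0))^{3k}$ together with one extra factor $(x_0-z_0)^k$ to commute $c$ past both $a$ and $b$ term by term, arriving at the uniform exponent $4k$. Your approach instead passes through the alternative description \eqref{eY-different} of Remark~\ref{rdifferent-def}, obtains a vanishing relation for the full generating series in $z_0$, and then extracts the individual modes via an inductive argument. This is a standard vertex-algebra technique and is arguably more conceptual; the trade-off is that it relies on Remark~\ref{rdifferent-def} (itself a consequence of the definition and delta-function identities) and yields an $m_0$-dependent locality exponent $k_1+k_2+k_3-1-m_0$ rather than a single one.

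Two minor comments on exposition. First, ``downward induction on $M\ge0$'' is slightly misleading: the induction is ordinary (increasing) induction on $M$, which corresponds to decreasing the mode index $m_0=k_1-1-M$. Second, the step where you interchange the commutator with $c(w_0,\rsymbol{w})$ and the substitution $x_0=y_0+z_0$ deserves one sentence of justification: it holds because, applied to any $w\in W$ and for each fixed $(w_0,\rsymbol{w})$-coefficient of $c(w_0,\rsymbol{w})w$, only finitely many negative $x_0$-powers of $(x_0-y_0)^{k_1}x_0^{j/N}a(x_0,\z\y)b(y_0,\y)$ contribute (this is exactly what Remark~\ref{rdifferent-def} establishes), so the substitution commutes with the coefficient-wise commutator. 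With these points clarified, your proof is complete.
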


\begin{proof} Fix $(m_0,\m)\in \Z\times \Zr$.
Let $k\in \N$ be such that $k+m_{0}>0$ and
\begin{eqnarray*}
&&(x_0-y_0)^k a(x_0,\x)b(y_0,\y)=(x_0-y_0)^k b(y_0,\y)a(x_0,\x),\\
&&(x_0-y_0)^k a(x_0,\x)c(y_0,\y)=(x_0-y_0)^k c(y_0,\y)a(x_0,\x),\\
&&(x_0-y_0)^k b(x_0,\x)c(y_0,\y)=(x_0-y_0)^k c(y_0,\y)b(x_0,\x).
\end{eqnarray*}
Assume $a(x_0,\x)\in \mathcal{E}(W,r;N)_s$ with $0\le s<N$. We have
\begin{eqnarray}
&&a(x_0,\x)_\suball{m_0}{\m}b(x_0,\x)\nonumber\\
&=&\Res_{y_0}{\x}^{-\m}\sum_{i=0}^{\infty}\choice{\frac{s}{N}}{i}\left(\frac{y_{0}}{x_{0}}\right)^{\frac{s}{N}}y_{0}^{-i}\cdot(-1)^{i}\cdot T_{i}\nonumber\\
&=&\Res_{y_0}{\x}^{-\m}\sum_{i=0}^{2k}\choice{\frac{s}{N}}{i}\left(\frac{y_{0}}{x_{0}}\right)^{\frac{s}{N}}y_{0}^{-i}\cdot(-1)^{i}\cdot T_{i},\nonumber
\end{eqnarray}
where
\begin{eqnarray*}
&&T_{i}=\left(y_0-x_0\right)^{m_0+i}
a(y_0,\m)b(x_0,\x)-\left(-x_0+y_0\right)^{m_0+i}
b(x_0,\x)a(y_0,\m).
\end{eqnarray*}
Since
\begin{eqnarray*}
&&(x_0-z_0)^{4k}((y_0-x_0)^{m_0+i}
a(y_0,\m)b(x_0,\x)c(z_0,\z)-(-x_0+y_0)^{m_0+i}
b(x_0,\x)a(y_0,\m)c(z_0,\z))\nonumber\\
&=&\sum_{j=0}^{3k}\choice{3k}{j}(x_{0}-y_{0})^{3k-j}(y_{0}-z_{0})^{j}(x_{0}-z_{0})^{k}\cdot
\nonumber\\
&&\quad\cdot((y_0-x_0)^{m_0+i}
a(y_0,\m)b(x_0,\x)c(z_0,\z)-(-x_0+y_0)^{m_0+i}
b(x_0,\x)a(y_0,\m)c(z_0,\z))\nonumber\\
&=&\sum_{j=k+1}^{3k}\choice{3k}{j}(x_{0}-y_{0})^{3k-j}(y_{0}-z_{0})^{j}(x_{0}-z_{0})^{k}\cdot
\nonumber\\
&&\quad\cdot((y_0-x_0)^{m_0+i}
a(y_0,\m)b(x_0,\x)c(z_0,\z)-(-x_0+y_0)^{m_0+i}
b(x_0,\x)a(y_0,\m)c(z_0,\z))\nonumber\\
&=&\sum_{j=k+1}^{3k}\choice{3k}{j}(x_{0}-y_{0})^{3k-j}(y_{0}-z_{0})^{j}(x_{0}-z_{0})^{k}\cdot
\nonumber\\
&&\quad\cdot((y_0-x_0)^{m_0+i}
c(z_0,\z)a(y_0,\m)b(x_0,\x)-(-x_0+y_0)^{m_0+i}
c(z_0,\z)b(x_0,\x)a(y_0,\m))\nonumber\\
&=&(x_0-z_0)^{4k}((y_0-x_0)^{m_0+i}
c(z_0,\z)a(y_0,\m)b(x_0,\x)\nonumber\\
&&\quad\quad-(-x_0+y_0)^{m_0+i}
c(z_0,\z)b(x_0,\x)a(y_0,\m)),\nonumber
\end{eqnarray*}
we get
\begin{eqnarray*}
(x_0-z_0)^{4k}(a(x_0,\x)_\suball{m_0}{\m}b(x_0,\x))c(z_0,\z)=(x_0-z_0)^{4k}c(z_0,\z)(a(x_0,\x)_\suball{m_0}{\m}b(x_0,\x)).
\end{eqnarray*}
as desired.
\end{proof}

A graded local subspace $U$ of $\mathcal{E}(W,r;N)$ is said to be
{\em closed} if
\begin{eqnarray*}
1_W\in U\    \mbox{ and }\  a(x_0,\x)_{\suball{m_0}{\m}}b(x_0,\x)\in U
\end{eqnarray*}
for all $a(x_0,\x),b(x_0,\x)\in U$, $(m_0,\m)\in \Z\times\Zr$.

The following is the main result of this section:

\begin{thm}\label{thm:ConceptionalConstruction}
Let $V$ be a closed graded local subspace of $\mathcal{E}(W,r;N)$.
Then $(V,Y_{\mathcal{E}},1_W)$ carries the structure of an \rtva{} with $\sigma$ as an automorphism
which has $N$ as a period and $W$
is a faithful $\sigma$-twisted $V$-module with
$$\Ymod{W}{a(x_0,\x)}{z_0}{\z}=a(z_0,\z)\   \   \mbox{ for }a(x_0,\x)\in V.$$
\end{thm}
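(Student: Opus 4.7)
The proof adapts Li's method for local systems of twisted vertex operators \cite{Li2} to the toroidal setting. I will verify in turn (i) the vacuum and creation axioms for $(V,Y_{\mathcal{E}},1_W)$, (ii) that $\sigma$ restricts to an automorphism of $V$ of period $N$, (iii) that $W$ with $Y_W(a;z_0,\z):=a(z_0,\z)$ is a $\sigma$-twisted $V$-module, and (iv) the TVA Jacobi identity (\ref{ejacobi-rva}) on $V$ itself.

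\textbf{Vacuum, creation, grading.} Taking $a=1_W$ (so $j=0$) in Definition \ref{def:defofYe}, the fractional-exponent factor is trivial and the two delta-function terms combine via the standard identity $z_0^{-1}\delta\!\left(\frac{x_0-y_0}{z_0}\right)-z_0^{-1}\delta\!\left(\frac{y_0-x_0}{-z_0}\right)=y_0^{-1}\delta\!\left(\frac{x_0-z_0}{y_0}\right)$; applying $\Res_{x_0}$ yields $Y_{\mathcal{E}}(1_W;z_0,\z)b=b$. A parallel calculation, or a direct appeal to (\ref{eY-different}), gives $Y_{\mathcal{E}}(a(y_0,\y);z_0,\z)1_W=a(y_0+z_0,\z\y)$, which lies in $V[[z_0,\varr{z}]]$ because $(y_0+z_0)^{-m-1}$ is expanded in nonnegative powers of $z_0$. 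That $\sigma$ restricts to an automorphism of $V$ of period $N$ then follows from $1_W\in\mathcal{E}(W,r;N)_0$ combined with the grading compatibility of $Y_{\mathcal{E}}$ in Lemma \ref{rem:Operators}.

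\textbf{$W$ as a twisted module.} The vacuum axiom for $Y_W$ is immediate. An inspection of the proof of Lemma \ref{lem:JacobianComAsso} shows that it only manipulates the bilinear product $(u,v)\mapsto Y_{\mathcal{E}}(u;z_0,\z)v$ and the vacuum and never invokes any Jacobi identity on $V$, so it applies here. Hence it suffices to verify weak commutativity and weak twisted associativity for $Y_W$: weak commutativity is exactly the locality hypothesis on $V\subset\mathcal{E}(W,r;N)$, while weak twisted associativity is identity (\ref{eY-different}) of Remark \ref{rdifferent-def}, which is a direct rewriting of Definition \ref{def:defofYe}. This establishes (\ref{eq:twistedJacobian}) on $W$.

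\textbf{TVA Jacobi identity on $V$ (the main obstacle).} Since $V$ embeds faithfully in $\mathcal{E}(W,r;N)$, the identity (\ref{ejacobi-rva}) can be checked by evaluation: apply both sides to an arbitrary $c\in V$, and then evaluate the resulting $V$-valued formal series on any $w\in W$. Iterating (\ref{eY-different}) twice on each of the three terms of (\ref{ejacobi-rva}), one rewrites $Y_{\mathcal{E}}(u;x_0,\z\y)Y_{\mathcal{E}}(v;y_0,\y)c$ and $Y_{\mathcal{E}}(Y_{\mathcal{E}}(u;z_0,\z)v;y_0,\y)c$, after evaluation on $w$, as expressions in $u(\cdot),v(\cdot),c(\cdot)$ multiplied by prefactors of the form $(x_0-y_0)^{k_1}$, $(x_0-z_0)^{k_2}$, $(y_0-z_0)^{k_3}$ and grading corrections $x_0^{s_u/N}$, $y_0^{s_v/N}$. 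The Jacobi identity then collapses to a delta-function manipulation mirroring the proof of Lemma \ref{lem:JacobianComAsso}, whose required inputs -- pairwise locality and the iterate formula on $W$ -- are already in hand. The toroidal variables $\x,\y,\z$ add no conceptual difficulty because they enter only via the commuting substitutions $\x\to\z\y$; the difficulty is purely computational bookkeeping, in keeping track of the simultaneous $(x_0-y_0)^{k_1}(x_0-z_0)^{k_2}(y_0-z_0)^{k_3}$ and fractional-exponent grading factors across the iterated substitutions $x_0\to y_0+z_0$ needed to pass from the outer TVA products to identities in $\mathcal{E}(W,r;N)$.
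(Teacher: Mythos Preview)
Your treatment of the vacuum and creation axioms, the $\sigma$-automorphism via Lemma~\ref{rem:Operators}, and the $\sigma$-twisted module structure on $W$ via Lemma~\ref{lem:JacobianComAsso} and Remark~\ref{rdifferent-def} is correct and matches the paper's proof exactly (the paper simply proves the Jacobi identity on $V$ first and the module structure afterward, so it never needs your observation that the proof of Lemma~\ref{lem:JacobianComAsso} does not use the Jacobi identity on $V$).

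The substantive discrepancy is in step (iv). Your sketch says the Jacobi identity on $V$ ``collapses to a delta-function manipulation mirroring the proof of Lemma~\ref{lem:JacobianComAsso}.'' This is misleading: the computation in Lemma~\ref{lem:JacobianComAsso} is a single substitution $x_0\to y_0+z_0$ in a polynomial expression, whereas the Jacobi identity on $V$ involves \emph{two} nested $Y_{\mathcal{E}}$'s and hence two auxiliary variables. The paper does not iterate (\ref{eY-different}); instead it unpacks each term via Definition~\ref{def:defofYe}, introducing residue variables $u_0,v_0$, and then observes that after multiplying by $x_0^k y_0^k z_0^k$ and using pairwise locality all three terms reduce to residues against the single symmetric expression
\[
S=(v_0-t_0)^k(u_0-t_0)^k(v_0-u_0)^k\,a(v_0,\z\y\bdt)\,b(u_0,\y\bdt)\,c(t_0,\bdt),
\]
weighted by products of delta functions with fractional-power corrections. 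The equality of the two sides then comes down to a chain of applications of (\ref{eq:delta.}), including a nontrivial step where $\bigl(\frac{t_0+y_0+z_0}{v_0}\bigr)^{-\lambda/N}$ is factored through $\bigl(\frac{t_0+y_0}{u_0}\bigr)^{\lambda/N}$ using the $u_0$-delta function. Your iterated-(\ref{eY-different}) route can in principle be pushed through, but it is not ``mirroring'' Lemma~\ref{lem:JacobianComAsso}, and calling the remaining work ``purely computational bookkeeping'' undersells it: the reduction to a common $S$ and the fractional-power delta manipulations are where the actual content of the proof lies, and you have not supplied them.
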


\begin{proof} For any $b(x_0,\x)\in \mathcal{E}(W,r;N)$,  from Definition \ref{def:defofYe} we have
\begin{eqnarray*}
&&\Ye{1_W}{z_0}{\z}b(y_0,\y)=\Res_{x_0}\left(\dfunc{z_0}{x_0-y_0}{z_0}-\dfunc{z_0}{y_0-x_0}{-z_0}\right)b(y_0,\y)\\
&&\quad=\Res_{x_0}\dfunc{x_0}{y_0+z_0}{x_0}b(y_0,\y)=b(y_0,\y).
\end{eqnarray*}
On the other hand, for any $a(x_0,\x)\in\mathcal{E}(W,r;N)_{s}$ with $0\le s<N$, we have
\begin{eqnarray*}
&&\Ye{a(y_0,\y)}{z_0}{\z}1_W\\
&=&
 \Res_{x_0}\left( \frac{x_0-z_0}{y_0} \right)^{\frac{s}{N}}\left( \dfunc{z_0}{x_0-y_0}{z_0}-\dfunc{z_0}{y_0-x_0}{-z_0} \right)a(x_0,\z\y)\\
&=&\Res_{x_0}\dfunc{y_0}{x_0-z_0}{y_0}\left( \frac{x_0-z_0}{y_0} \right)^{\frac{s}{N}}a(x_0,\z\y)\\
&=&\Res_{x_0}\dfunc{x_0}{y_0+z_0}{x_0}\left( \frac{y_0+z_0}{x_0} \right)^{-\frac{s}{N}}a(x_0,\z\y)\\
&=&(y_0+z_0)^{-\frac{s}{N}}\Res_{x_0}\dfunc{x_0}{y_0+z_0}{x_0}\left(x_0^{\frac{s}{N}}a(x_0,\z\y\right)\\
&=&a(y_0+z_0,\z\y)=e^{z_0\frac{\partial}{\partial y_0}}a(y_0,\z\y).
\end{eqnarray*}

To prove the Jacobi identity, let $a(x_0,\x),b(x_0,\x),c(x_0,\x)\in V$ such that $a(x_0,\x)\in
\ESalltwisted{W}_\lambda$ and $b(x_0,\x)\in \ESalltwisted{W}_\mu$ with
$0\leq \lambda,\mu<N$.
For simplicity, in the following, we write $\Res_{u_0,v_0}$ for $\Res_{u_0}\Res_{v_0}$.
By Definition \ref{def:defofYe}, we have
\begin{eqnarray*}
&&\dfunc{z_0}{x_0-y_0}{z_0}\Ye{a(t_0,\bdt)}{x_0}{\y\z}\Ye{b(t_0,\bdt)}{y_0}{\y}c(t_0,\bdt)\\
&=&\Res_{u_0,v_0}\left( \frac{v_0-x_0}{t_0} \right)^{\frac{\lambda}{N}}\left(\frac{u_0-y_0}{t_0}\right)^{\frac{\mu}{N}}
    \dfunc{z_0}{x_0-y_0}{z_0} T,
\end{eqnarray*}
where
\begin{eqnarray*}
T&=&\dfunc{x_0}{v_0-t_0}{x_0}\dfunc{y_0}{u_0-t_0}{y_0}
a(v_{0},\z\y\bdt)b(u_{0},\y\bdt)c(t_{0},\bdt)\\
&&-\dfunc{x_0}{v_0-t_0}{x_0}\dfunc{y_0}{t_0-u_0}{-y_0}
a(v_{0},\z\y\bdt)c(t_{0},\bdt)b(u_{0},\y\bdt) \\
&&-\dfunc{x_0}{t_0-v_0}{-x_0}\dfunc{y_0}{u_0-t_0}{y_0}
b(u_{0},\y\bdt)c(t_{0},\bdt)a(v_{0},\z\y\bdt)\\
&&+\dfunc{x_0}{t_0-v_0}{-x_0}\dfunc{y_0}{t_0-u_0}{-y_0}
c(t_{0},\bdt)b(u_{0},\y\bdt)a(v_{0},\z\y\bdt).
\end{eqnarray*}
Let $k$ be a nonnegative integer such that
\begin{eqnarray*}
&&(x_0-y_0)^k\left[ a(x_0,\x),b(y_0,\y) \right]=0,\\
&&(x_0-y_0)^k\left[ a(x_0,\x),c(y_0,\y) \right]=0,\\
&&(x_0-y_0)^k\left[ b(x_0,\x),c(y_0,\y) \right]=0.
\end{eqnarray*}
Using the basic delta-function substitution property we get
\begin{eqnarray*}
&&x_0^ky_0^kz_0^k
    \dfunc{z_0}{x_0-y_0}{z_0} T=\dfunc{z_0}{x_0-y_0}{z_0}
    \left( v_0-t_0 \right)^k\left(u_0-t_0 \right)^k \left(v_0-u_0 \right)^k T   \\
&=&\dfunc{z_0}{x_0-y_0}{z_0}\dfunc{t_0}{v_0-x_0}{t_0}\dfunc{t_0}{u_0-y_0}{t_0}\\
&&\quad\cdot \left( v_0-t_0 \right)^k\left(u_0-t_0 \right)^k \left(v_0-u_0
\right)^k a(v_{0},\z\y\bdt)b(u_{0},\y\bdt)c(t_{0},\bdt).
\end{eqnarray*}
Consequently,
\begin{eqnarray*}
&&x_0^ky_0^kz_0^k\dfunc{z_0}{x_0-y_0}{z_0}\Ye{a(t_0,\bdt)}{x_0}{\y\z}\Ye{b(t_0,\bdt)}{y_0}{\y}c(t_0,\bdt)\\
&=&\Res_{u_0,v_0}\dfunc{z_0}{x_0-y_0}{z_0}\dfunc{t_0}{v_0-x_0}{t_0}\left( \frac{v_0-x_0}{t_0} \right)^{\frac{\lambda}{N}}
    \dfunc{t_0}{u_0-y_0}{t_0}\left(\frac{u_0-y_0}{t_0}\right)^{\frac{\mu}{N}}S,
\end{eqnarray*}
where
\begin{eqnarray*}
&&S=\left( v_0-t_0 \right)^k\left(u_0-t_0 \right)^k \left(v_0-u_0\right)^k
    a(v_{0},\z\y\bdt)b(u_{0},\y\bdt)c(t_{0},\bdt).
\end{eqnarray*}
Similarly, we have
\begin{eqnarray*}
&&x_0^ky_0^kz_0^k\dfunc{z_0}{y_0-x_0}{-z_0}\Ye{b(t_0,\bdt)}{y_0}{\y}\Ye{a(t_0,\bdt)}{x_0}{\y\z}c(t_0,\bdt)\\
&=&\Res_{u_0,v_0}\dfunc{z_0}{y_0-x_0}{-z_0}\dfunc{t_0}{v_0-x_0}{t_0}\left( \frac{v_0-x_0}{t_0} \right)^{\frac{\lambda}{N}}
    \dfunc{t_0}{u_0-y_0}{t_0}\left(\frac{u_0-y_0}{t_0}\right)^{\frac{\mu}{N}}S.
\end{eqnarray*}

On the other hand, we have
\begin{eqnarray*}
&&\dfunc{y_0}{x_0-z_0}{y_0}\Ye{\Ye{a(t_0,\bdt)}{z_0}{\z}b(t_0,\bdt)}{y_0}{\y}c(t_0,\bdt)\\
&=&\Res_{u_0,v_0}\left(\frac{v_0-z_0}{u_0}\right)^{\frac{\lambda}{N}}\left( \frac{u_0-y_0}{t_0} \right)^{\frac{\lambda+\mu}{N}}\dfunc{y_0}{x_0-z_0}{y_0}T',
\end{eqnarray*}
where
\begin{eqnarray*}
T'&=&\dfunc{z_0}{v_0-u_0}{z_0}\dfunc{y_0}{u_0-t_0}{y_0}a(v_0,\z\y\bdt)b(u_0,\y\bdt)c(t_0,\bdt)\\
&&-\dfunc{z_0}{u_0-v_0}{-z_0}\dfunc{y_0}{u_0-t_0}{y_0}b(u_0,\y\bdt)a(v_0,\z\y\bdt)c(t_0,\bdt)\\
&&-\dfunc{z_0}{v_0-u_0}{z_0}\dfunc{y_0}{t_0-u_0}{y_0}c(t_0,\bdt)a(v_0,\z\y\bdt)b(u_0,\y\bdt)\\
&&+\dfunc{z_0}{u_0-v_0}{-z_0}\dfunc{y_0}{t_0-u_0}{y_0}c(t_0,\bdt)b(u_0,\y\bdt)a(v_0,\z\y\bdt),
\end{eqnarray*}
and furthermore we have
\begin{eqnarray*}
&&x_0^ky_0^kz_0^k \dfunc{y_0}{x_0-z_0}{y_0}\Ye{\Ye{a(t_0,\bdt)}{z_0}{\z}b(t_0,\bdt)}{y_0}{\y}c(t_0,\bdt)\\
&=&\Res_{u_0,v_0}\dfunc{y_0}{x_0-z_0}{y_0}\dfunc{u_0}{v_0-z_0}{u_0}\left(\frac{v_0-z_0}{u_0}\right)^{\frac{\lambda}{N}}
    \dfunc{t_0}{u_0-y_0}{t_0}\left( \frac{u_0-y_0}{t_0} \right)^{\frac{\lambda+\mu}{N}}S.
\end{eqnarray*}
Note that for any $\alpha\in\C$, $(x+y)^\alpha$  involves only integer powers of $y$.
Then we have
\begin{eqnarray*}
&&x_0^ky_0^kz_0^k\dfunc{z_0}{x_0-y_0}{z_0}\Ye{a(t_0,\bdt)}{x_0}{\y\z}\Ye{b(t_0,\bdt)}{y_0}{\y}c(t_0,\bdt)\\
&&\quad-x_0^ky_0^kz_0^k\dfunc{z_0}{y_0-x_0}{-z_0}\Ye{b(t_0,\bdt)}{y_0}{\y}\Ye{a(t_0,\bdt)}{x_0}{\y\z}c(t_0,\bdt)\\
&=&\Res_{u_0,v_0}\dfunc{y_0}{x_0-z_0}{y_0}\dfunc{t_0}{v_0-x_0}{t_0}\left( \frac{v_0-x_0}{t_0} \right)^{\frac{\lambda}{N}}
    \dfunc{t_0}{u_0-y_0}{t_0}\left(\frac{u_0-y_0}{t_0}\right)^{\frac{\mu}{N}}S\\
&=&\Res_{u_0,v_0}\dfunc{x_0}{y_0+z_0}{x_0}\dfunc{v_0}{t_0+x_0}{v_0}\left( \frac{t_0+x_0}{v_0} \right)^{-\frac{\lambda}{N}}
    \dfunc{u_0}{t_0+y_0}{u_0}\left(\frac{t_0+y_0}{u_0}\right)^{-\frac{\mu}{N}}S\\
&=&\Res_{u_0,v_0}\dfunc{x_0}{y_0+z_0}{x_0}\dfunc{v_0}{t_0+y_0+z_0}{v_0}\left( \frac{t_0+y_0+z_0}{v_0} \right)^{-\frac{\lambda}{N}}
        \left( \frac{t_0+y_0}{u_0} \right)^{\frac{\lambda}{N}}\\
&&\quad\quad\cdot   \dfunc{u_0}{t_0+y_0}{u_0}\left(\frac{t_0+y_0}{u_0}\right)^{-\frac{\lambda+\mu}{N}}S\\
&=&\Res_{u_0,v_0}\dfunc{x_0}{y_0+z_0}{x_0}\dfunc{v_0}{t_0+y_0+z_0}{v_0}\left( \frac{1+z_0(t_0+y_0)^{-1}}{v_0} \right)^{-\frac{\lambda}{N}}
         u_0 ^{-\frac{\lambda}{N}}\\
&&\quad\quad\cdot   \dfunc{u_0}{t_0+y_0}{u_0}\left(\frac{t_0+y_0}{u_0}\right)^{-\frac{\lambda+\mu}{N}}S\\
&=&\Res_{u_0,v_0}\dfunc{x_0}{y_0+z_0}{x_0}\dfunc{v_0}{u_0+z_0}{v_0}\left( \frac{u_0+z_0}{v_0} \right)^{-\frac{\lambda}{N}}
    \dfunc{u_0}{t_0+y_0}{u_0}\left(\frac{t_0+y_0}{u_0}\right)^{-\frac{\lambda+\mu}{N}}S\\
&=&\Res_{u_0,v_0}\dfunc{y_0}{x_0-z_0}{y_0}\dfunc{u_0}{v_0-z_0}{u_0}\left(\frac{v_0-z_0}{u_0}\right)^{\frac{\lambda}{N}}
    \dfunc{t_0}{u_0-y_0}{t_0}\left( \frac{u_0-y_0}{t_0} \right)^{\frac{\lambda+\mu}{N}}S\\
&=&x_0^ky_0^kz_0^k\dfunc{y_0}{x_0-z_0}{y_0}\Ye{\Ye{a(t_0,\bdt)}{z_0}{\z}b(t_0,\bdt)}{y_0}{\y}c(t_0,\bdt).
\end{eqnarray*}
Thus
\begin{eqnarray*}
&&\dfunc{z_0}{x_0-y_0}{z_0}\Ye{a(t_0,\bdt)}{x_0}{\y\z}\Ye{b(t_0,\bdt)}{y_0}{\y}c(t_0,\bdt)\\
&&\quad\quad-\dfunc{z_0}{y_0-x_0}{-z_0}\Ye{b(t_0,\bdt)}{y_0}{\y}\Ye{a(t_0,\bdt)}{x_0}{\y\z}c(t_0,\bdt)\\
&=&\dfunc{x_0}{y_0+z_0}{x_0}\Ye{\Ye{a(t_0,\bdt)}{z_0}{\z}b(t_0,\bdt)}{y_0}{\y}c(t_0,\bdt).
\end{eqnarray*}
This proves that the Jacobi identity holds. Therefore, $(V,Y_\mathcal{E},1_W)$ carries the structure of an \rtva{}.

As $V$ is a graded subspace of $\mathcal{E}(W,r;N)$, $\sigma$ is naturally a linear automorphism of $V$.
It follows from Lemma \ref{rem:Operators} that
$\sigma$ is an automorphism of $V$ viewed as an \rtva{} and $\sigma^{N}=1$.

For $a(x_0,\x)\in V$, we set
\begin{eqnarray}\label{eq:defofYW123}
\Ymod{W}{a(x_0,\x)}{z_0}{\z}=a(z_0,\z).
\end{eqnarray}
For $a(x_0,\x),b(x_0,\x)\in V$, from assumption
 they are mutually local.
To show that $(W,Y_W)$ is a $\sigma$-twisted $V$-module, in view of Lemma \ref{lem:JacobianComAsso}
we need to establish weak associativity.
Let $k$ be a nonnegative integer such that
$$(x_0-y_0)^k a(x_0,\x)b(y_0,\y)=(x_0-y_0)^k b(y_0,\y)a(x_0,\x).$$
Assume $a(x_0,\x)\in \mathcal{E}(W,r;N)_s$ with $0\leq s<N$.
Let $w\in W$ be an arbitrarily fixed vector in $W$ and let $\ell$ be a nonnegative integer such that
$$x_0^{\ell+\frac{s}{N}}a(x_0,\x)w\in W[[x_0,\x,\x^{-1}]].$$
From Remark \ref{rdifferent-def},  we have
\begin{eqnarray*}
&&z_0^{k}\left(y_0+z_0 \right)^{\ell+\frac{s}{N}}\left(\Ye{a(y_0,\y)}{z_0}{\z}b(y_0,\y)\right)w\\
&=&\left[(x_0-y_0)^{k}x_0^{\ell+\frac{s}{N}}a(x_0,\z\y)b(y_0,\y)w\right]|_{x_0=y_0+z_0}\\
&=&\left[(x_0-y_0)^{k}x_0^{\ell+\frac{s}{N}}a(x_0,\z\y)b(y_0,\y)w\right]|_{x_0=z_0+y_0}\\
&=&z_0^k(z_0+y_0)^{\ell+\frac{s}{N}}a(z_0+y_0,\y\z)b(y_0,\y)w.
\end{eqnarray*}
This proves
\begin{eqnarray*}
&&\left(y_0+z_0 \right)^{\ell+\frac{s}{N}}\Ymod{W}{\Ye{a(t_0,\bdt)}{z_0}{\z}b(t_0,\bdt)}{y_0}{\y}\\
&=&\left(y_0+z_0 \right)^{\ell+\frac{s}{N}}\Ye{a(y_0,\y)}{z_0}{\z}b(y_0,\y)\\
&=&(z_0+y_0)^{\ell+\frac{s}{N}}a(z_0+y_0,\y\z)b(y_0,\y)\\
&=&(z_0+y_0)^{\ell+\frac{s}{N}}\Ymod{W}{a(t_0,\bdt)}{z_0+y_0}{\y\z}\Ymod{W}{b(t_0,\bdt)}{y_0}{\y}.
\end{eqnarray*}
By Lemma \ref{lem:JacobianComAsso}, $W$ is a $\sigma$-twisted $V$-module.
It is clear that $W$ is faithful.
\end{proof}

Using Lemma \ref{lem:localclosed} and Theorem \ref{thm:ConceptionalConstruction},
we  immediately have (cf. \cite{Li2}):

\begin{coro}\label{cgenerated-algebra}
Let $U$ be any graded local subspace of $\mathcal{E}(W,r;N)$. Then there exists a smallest closed graded local subspace
$\<U\>$ that contains $U$, and $\<U\>$ is an $(r+1)$-toroidal vertex algebra
with $W$ as a canonical faithful $\sigma$-twisted module.
\end{coro}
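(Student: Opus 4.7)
The plan is to construct $\<U\>$ explicitly as a nested union of subspaces obtained by iteratively closing under the operations $a(y_0,\y)_{\suball{m_0}{\m}}b(y_0,\y)$, and then to invoke Theorem \ref{thm:ConceptionalConstruction}. Concretely, I would set $U_0=\Span(U\cup\{1_W\})$, which is graded (since $1_W\in\mathcal{E}(W,r;N)_0$) and local (since $1_W$ commutes with every element of $\mathcal{E}(W,r;N)$), and then recursively define
\begin{eqnarray*}
U_{n+1}=U_n+\Span\set{a(y_0,\y)_{\suball{m_0}{\m}}b(y_0,\y)}{a,b\in U_n,\ (m_0,\m)\in \Z\times\Zr},
\end{eqnarray*}
finally putting $\<U\>=\bigcup_{n\ge 0}U_n$.

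The main inductive claim, and the only nontrivial point, is that every $U_n$ remains a graded local subspace of $\mathcal{E}(W,r;N)$. Gradedness is automatic from Lemma \ref{rem:Operators}, which guarantees $a(y_0,\y)_{\suball{m_0}{\m}}b(y_0,\y)\in\mathcal{E}(W,r;N)_{j+s}$ when $a\in\mathcal{E}(W,r;N)_j$ and $b\in\mathcal{E}(W,r;N)_s$. The delicate step is locality. Assume inductively that $U_n$ is pairwise local, and take two elements from the generating set of $U_{n+1}$, say $\alpha=a(y_0,\y)_{\suball{m_0}{\m}}b(y_0,\y)$ and $\beta=a'(y_0,\y)_{\suball{m_0'}{\m'}}b'(y_0,\y)$ with $a,b,a',b'\in U_n$. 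A first application of Lemma \ref{lem:localclosed} to each of the pairwise local triples $\{a,b,a'\}$ and $\{a,b,b'\}$ shows that $\alpha$ is mutually local with both $a'$ and $b'$; hence $\{a',b',\alpha\}$ is pairwise local, and a second application of Lemma \ref{lem:localclosed} to this triple yields that $\alpha$ is mutually local with $\beta$. Locality of $U_{n+1}$ then follows by bilinearity. This iterated use of Lemma \ref{lem:localclosed} is the main obstacle; everything else in the proof is bookkeeping.

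Given the inductive properties, $\<U\>$ is visibly a graded local subspace of $\mathcal{E}(W,r;N)$. It is closed because any $a,b\in\<U\>$ lie in some common $U_n$, so $a(y_0,\y)_{\suball{m_0}{\m}}b(y_0,\y)\in U_{n+1}\subseteq\<U\>$, and $1_W\in U_0\subseteq\<U\>$. For minimality, any closed graded local subspace of $\mathcal{E}(W,r;N)$ containing $U$ must contain $U_0$, and inductively each $U_n$, hence $\<U\>$. Theorem \ref{thm:ConceptionalConstruction} applied to $\<U\>$ then endows it with the structure of an $(r+1)$-toroidal vertex algebra with $\sigma$ as an automorphism of period $N$, and exhibits $W$ as the canonical faithful $\sigma$-twisted $\<U\>$-module.
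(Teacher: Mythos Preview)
Your proposal is correct and is precisely the argument the paper has in mind: the paper simply states that the corollary follows ``immediately'' from Lemma \ref{lem:localclosed} and Theorem \ref{thm:ConceptionalConstruction} (citing \cite{Li2} for the analogous construction), and your inductive closure $U_0\subset U_1\subset\cdots$ together with the iterated use of Lemma \ref{lem:localclosed} is exactly the standard way to make this ``immediately'' precise.
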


In view of Theorem \ref{thm:ConceptionalConstruction}, we alternatively call a closed graded local subspace
of $\mathcal{E}(W,r;N)$ an {\em $(r+1)$-toroidal vertex subalgebra of $\mathcal{E}(W,r;N)$.}

Let $V$ be any $(r+1)$-toroidal vertex subalgebra of $\mathcal{E}(W,r;N)$.
Note that by Theorem \ref{thm:ConceptionalConstruction}, $(W,Y_W)$ is a faithful $\sigma$-twisted $V$-module
with $Y_{W}(a(x_0,\x);z_0,\z)=a(z_0,\z)$ for $a(x_0,\x)\in V$.
In view of Lemma \ref{ltwistedm-rva}, we immediately have:

\begin{coro}\label{prop:calculationProp0001}
Let $V$ be an $(r+1)$-toroidal vertex subalgebra of $\mathcal{E}(W,r;N)$, let
\begin{eqnarray*}
a(x_0,\x),b(x_0,\x),c_0(x_0,\x),c_1(x_0,\x),\dots,c_k(x_0,\x)\in V,
\end{eqnarray*}
and let $\m\in \Zr$. Assume $a(x_0,\x)\in \mathcal{E}(W,r;N)_s$ with $0\le s<N$. If
\begin{eqnarray}
\left[ a(x_0,\m),b(y_0,\y) \right]=\y^\m\sum\limits_{j=0}^k
c_j(y_0,\y)\frac{1}{j!}\left(\frac{\partial}{\partial y_0} \right)^j
\left(\dfunc{x_0}{y_0}{x_0}\left(\frac{y_0}{x_0}\right)^{\frac{s}{N}} \right),
\end{eqnarray}
where $a(x_0,\x)=\sum\limits_{\n\in\Zr}a(x_0,\n)\x^{-\n}$, then
\begin{eqnarray}
a(x_0,\x)_{j,\m}b(x_0,\x)=c_j(x_0,\x)
\end{eqnarray}
for $0\leq j\leq k$ and $a(x_0,\x)_{j,\m}b(x_0,\x)=0$ for $j>k$. Furthermore, we have
\begin{eqnarray}\label{eq:bracket}
&&\left[ \Ye{a(t_0,\bdt)}{x_0}{\m},\Ye{b(t_0,\bdt)}{y_0}{\y} \right]\nonumber\\
&=&\y^\m \sum\limits_{j=0}^k \Ye{c_j(t_0,\bdt)}{y_0}{\y}\frac{1}{j!}
\left(\frac{\partial}{\partial y_0} \right)^j \dfunc{x_0}{y_0}{x_0}.
\end{eqnarray}
\end{coro}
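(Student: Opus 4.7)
The plan is to read this corollary as a direct consequence of Lemma \ref{ltwistedm-rva} applied to the canonical faithful $\sigma$-twisted $V$-module $W$ furnished by Theorem \ref{thm:ConceptionalConstruction}. Under that theorem the module action is pure evaluation: $Y_W(a(t_0,\bdt);x_0,\x)=a(x_0,\x)$ for every $a(x_0,\x)\in V$. Extracting the $\x^{-\m}$-coefficient as in (\ref{edef-summand}) this becomes $Y_W(a(t_0,\bdt);x_0,\m)=a(x_0,\m)$, and analogously for $b$ and each $c_j$. With this dictionary in hand, the hypothesis of the corollary, which is a commutator identity among coefficient series acting on $W$, is literally hypothesis (\ref{ebracket-onmodule}) of Lemma \ref{ltwistedm-rva} with $c^{(j)}=c_j\in V$ and the same $s$, since $a(x_0,\x)\in\mathcal{E}(W,r;N)_s$ corresponds to $a\in V^s$ under the grading automorphism.

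Invoking Lemma \ref{ltwistedm-rva} then yields both halves of the corollary at one stroke. The identification $a(x_0,\x)_{j,\m}b(x_0,\x)=c_j(x_0,\x)$ for $0\leq j\leq k$, together with vanishing for $j>k$, is the first conclusion of the lemma. The second conclusion is the untwisted commutator identity
\begin{eqnarray*}
\left[Y(a;x_0,\m),Y(b;y_0,\y)\right]=\y^\m\sum_{j=0}^{k}Y(c_j;y_0,\y)\frac{1}{j!}\left(\frac{\partial}{\partial y_0}\right)^{j}\dfunc{x_0}{y_0}{x_0}
\end{eqnarray*}
inside the \rtva{} $V$. Since the vertex operator of $V\subset\mathcal{E}(W,r;N)$ is $Y_\mathcal{E}$, this is exactly the identity (\ref{eq:bracket}).

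There is no serious obstacle here; the argument is a direct transcription. The only two things to confirm before applying the lemma are faithfulness of $W$ as a $\sigma$-twisted $V$-module and the compatibility of the two gradings $V^s\leftrightarrow\mathcal{E}(W,r;N)_s$, and both are guaranteed by Theorem \ref{thm:ConceptionalConstruction}, where $\sigma$ on the \rtva{} $V$ is defined precisely as the restriction of the ambient $\Z_N$-grading automorphism on $\mathcal{E}(W,r;N)$.
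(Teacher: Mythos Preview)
Your proposal is correct and matches the paper's own argument exactly: the paper derives the corollary ``in view of Lemma \ref{ltwistedm-rva}'' after noting that Theorem \ref{thm:ConceptionalConstruction} makes $W$ a faithful $\sigma$-twisted $V$-module with $Y_W(a(x_0,\x);z_0,\z)=a(z_0,\z)$. Your identification of the two gradings and of the commutator hypothesis with (\ref{ebracket-onmodule}) is precisely the dictionary the paper has in mind.
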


\section{$(r+1)$-toroidal vertex algebras and twisted modules associated to
twisted toroidal Lie algebras}

In this section, we shall associate \rtva{}s and their
twisted modules to certain twisted toroidal Lie algebras by using the general
construction established in Section \ref{Section:ConceptionalConstruction}.

Let $\g$ be a finite dimensional simple Lie algebra and let $\<\cdot,\cdot\>$ be the killing form
which is normalized such that $\<\alpha,\alpha\>=2$ for any long root $\alpha$.
Let $\sigma_0,\sigma_1,\dots,\sigma_r$ be $r+1$ mutually commuting finite order automorphisms of $\g$,
which are fixed throughout this section.

Set $o(\sigma_i)=N_i$ for $0\leq i\leq r$ and
set
\begin{eqnarray}
G&=&\<\sigma_{0},\sigma_1,\dots,\sigma_r\>\subset \Aut(\g),\\
G_{+}&=&\<\sigma_{1},\sigma_2,\dots,\sigma_r\>\subset G.
\end{eqnarray}
Note that every automorphism of $\g$ preserves the killing form (cf. \cite{K}) and symmetric invariant bilinear forms on $\g$
are unique up to scalar multiples (as $\g$ is simple).
It follows that all $\sigma_i$ for $0\leq i\leq r$ preserve $\<\cdot,\cdot\>$.

Consider the following $(r+1)$-loop Lie algebra
\begin{eqnarray*}
L_{r+1}(\g,N_{0})=\g\otimes\C[\varz{t}{\ovN},\varr{t}].
\end{eqnarray*}
Form a $1$-dimensional central extension
\begin{eqnarray}
\widehat{L_{r+1}}(\g,N_{0})=L_{r+1}(\g,N_{0})\oplus \C \cent,
\end{eqnarray}
where $\cent$ is central and
\begin{eqnarray}
\left[ a\otimes t_0^{m_0'}\bdt^\m,b\otimes t_0^{n_0'}\bdt^\n \right]
=\left[a,b\right]\otimes t_0^{m_0'+n_0'}\bdt^{\m+\n}+
    m_0'\<a,b\>\delta_{m_0'+n_0',0}\delta_{\m+\n,0}\cent
\end{eqnarray}
for $a,b\in\g$, $m_0',n_0'\in \ovN\Z$, $\m,\n\in \Zr$.

\begin{de}
{\em For $0\leq i\leq r$,  we define  an automorphism $\hat{\sigma}_i$ of 
the Lie algebra $\widehat{L_{r+1}}(\g,N_{0})$ by
\begin{eqnarray}
\hat{\sigma}_i(\cent)=\cent, \   \   \
\hat{\sigma}_i(a\otimes t_0^{\frac{m_0}{N_{0}}}\bdt^\m)=\rtu{N_i}^{-m_i}
\left(\sigma_i(a)\otimes t_0^{\frac{m_0}{N_{0}}}\bdt^\m\right)
\end{eqnarray}
for $a\in \g$, $(m_0,\m)\in \Z\times \Z^r$ with $\m=(m_{1},\dots,m_{r})$.}
\end{de}

Note that $\hat{\sigma}_{i}$ depends on both the index $i$ and the corresponding automorphism $\sigma_i$.
We particularly mention that for $0\le i\ne j\le r$, $\hat{\sigma}_i$ and $\hat{\sigma}_j$ 
are different  even though $\sigma_i$ and $\sigma_j$ could be the same. 
The notation $\hat{\sigma}_{i}$, which is somewhat misleading, 
is for the purpose of convenience. 

Set
\begin{eqnarray}
&&\widehat{G}=\<\hat{\sigma}_0,\hat{\sigma}_1,\dots,\hat{\sigma}_r\>\subset \Aut \left(\widehat{L_{r+1}}(\g,N_{0})\right),\\
&&\widehat{G}_{+}=\<\hat{\sigma}_1,\dots,\hat{\sigma}_r\>\subset \widehat{G}.
\end{eqnarray}
It is straightforawrd to show that 
$$\widehat{G}\simeq \<\hat{\sigma}_0\>\times \<\hat{\sigma}_1\>\times \cdots \times \<\hat{\sigma}_r\>
\   \mbox{ and }\  \   
\widehat{G}_{+}\simeq \<\hat{\sigma}_1\>\times \<\hat{\sigma}_2\>\times \cdots \times \<\hat{\sigma}_r\>.$$
Set $\Nfactor=(N_1,\dots,N_r)$. 
Furthermore, we set
\begin{eqnarray}
\Lambda(\Nfactor)=\Z N_1\times \cdots\times \Z N_r\subset \Z^r.
\end{eqnarray}
Then
\begin{eqnarray}
\Z^r/\Lambda(\Nfactor)\simeq \Z_{N_1}\times \cdots\times \Z_{N_r}
\simeq \<\hat{\sigma}_1\>\times \<\hat{\sigma}_2\>\times \cdots \times \<\hat{\sigma}_r\>=\widehat{G}_{+}.
\end{eqnarray}
Note that $G$ is a homomorphism image of $\widehat{G}$ with $\hat{\sigma}_{i}$ 
corresponding to $\sigma_i$ for $0\le i\le r$.
Then $\widehat{G}$ (and $\widehat{G}_{+}$) naturally acts on $\g$ by automorphisms.

Set
\begin{eqnarray}
\tau=\left(\widehat{L_{r+1}}(\g,N_{0})\right)^{\widehat{G}},
\end{eqnarray}
the subalgebra of $\widehat{G}$-fixed points in $\widehat{L_{r+1}}(\g,N_{0})$. Then
\begin{eqnarray}
\Lie=\left(\mathop{\bigoplus}\limits_{(m_0,\m)\in \Z\times \Z^r}\g_{\suball{m_0}{\m}}\otimes t_0^{\frac{m_0}{N_{0}}}\bdt^\m\right)\oplus \C\cent,
\end{eqnarray}
where for $m_0\in \Z$, $\m=(m_{1},\dots,m_{r})\in \Z^{r}$,
\begin{eqnarray}
\g_{\suball{m_0}{\m}}=\set{ a\in \g }{\sigma_i(a)=\rtu{N_i}^{m_i}a\  \,\fortext
0\leq i\leq r }.
\end{eqnarray}

For $a\in \g_{\suball{k_0}{\rsymbol{k}}}$ with $(k_0,\rsymbol{k})\in \Z\times \Z^r$, set
\begin{eqnarray}
a^{\tau}(x_0,\x)=\sum_{m_0\in \Z,\  \m\in \rsymbol{k}+ \Lambda(\Nfactor)}
\left(a\otimes t_0^{\frac{k_0}{N_{0}}+m_0}\bdt^{\m}\right)x_0^{-\frac{k_0}{N_{0}}-m_0-1}\x^{-\m},
\end{eqnarray}
and we also set
\begin{eqnarray}
a^{\tau}(x_0,\m)=\sum_{m_0\in \Z}\left(a\otimes t_0^{\frac{k_0}{N_{0}}+m_0}\bdt^{\m}\right)
x_0^{-\frac{k_0}{N_{0}}-m_0-1}
\end{eqnarray}
for $\m\in\rsymbol{k}+ \Lambda(\Nfactor)$, and define $a^{\tau}(x_0,\m)=0$
for $\m\notin\rsymbol{k}+ \Lambda(\Nfactor)$.
 We then define $a^{\tau}(x_0,\x)$ for general $a\in \g$ by linearity.

For $a\in \g_{\suball{k_0}{\rsymbol{k}}},\  b\in\g_{\suball{\ell_0}{\rsymbol{l}}}$,
$\m\in\rsymbol{k}+ \Lambda(\Nfactor)\subset \Z^r$, we have
\begin{eqnarray}\label{eq:commutator}
\left[ a^{\tau}(x_0,\m),b^{\tau}(y_0,\y)
\right]&=&\y^{\m}[a,b]^{\tau}(y_0,\y)\dfunc{x_0}{y_0}{x_0}\left(\frac{y_0}{x_0}
\right)^{\frac{k_0}{N_{0}}}\nonumber\\
&& +\y^{\m}\<a,b\> \cent \frac{\partial}{\partial y_0}
    \left( \dfunc{x_0}{y_0}{x_0}\left(\frac{y_0}{x_0} \right)^{\frac{k_0}{N_{0}}}
    \right).
\end{eqnarray}
Note that for $\m\notin\rsymbol{k}+ \Lambda(\Nfactor)$, we have
$\left[ a^{\tau}(x_0,\m),b^{\tau}(y_0,\y) \right]=0$.

Let $\ell\in \C$.  A $\Lie$-module $W$ is said to be of {\em level} $\ell$
if $\cent$ acts on $W$ as scalar $\ell$, and $W$ is called a {\em restricted module} if
\begin{eqnarray}
a^{\tau}(x_0,\x)\in\ESalltwisted{W}\quad\text{for all }a\in\g.
\end{eqnarray}

Let $W$ be a restricted $\Lie$-module of level $\ell$. Set
\begin{eqnarray}
U_W=\Span\set{a^{\tau}(x_0,\x)}{a\in\g}.
\end{eqnarray}
It follows from (\ref{eq:commutator}) that $U_W$ is a graded local subspace of $\ESalltwisted{W}$.
In view of Corollary \ref{cgenerated-algebra},  $U_W$ generates
 an $(r+1)$-toroidal vertex subalgebra  $\<U_W\>$ of  $\ESalltwisted{W}$ and
$W$ is a faithful $\sigma$-twisted $\<U_W\>$-module with
$\Ymod{W}{\alpha(x_0,\x)}{z_0}{\z}=\alpha(z_0,\z)$ for $\alpha(x_0,\x)\in \<U_W\>$.
In the following, we are going to characterize $\<U_W\>$ as a module for another Lie algebra.

Consider the $(r+1)$-loop algebra 
$$L_{r+1}(\g)=\g\otimes \C[t_0^{\pm 1},t_1^{\pm 1},\dots,t_r^{\pm 1}],$$
which is equal to $L_{r+1}(\g,N_0)$ with $N_0=1$. As before, $\widehat{G}_{+}$ acts on $\widehat{L_{r+1}}(\g)$ by automorphisms.
Set
\begin{eqnarray}
\Lieu=\left(\widehat{L_{r+1}}(\g)\right)^{\widehat{G}_{+}},
\end{eqnarray}
the subalgebra of $\widehat{G}_{+}$-fixed points in $\widehat{L_{r+1}}(\g)$.
Then
\begin{eqnarray}
\mathcal{L}=\left(\mathop{\bigoplus}\limits_{\m\in \Zr}\g_\m\otimes  \bdt^\m\C\left[
t_0^{\pm 1} \right]\right) \bigoplus \C\cent,
\end{eqnarray}
where for $\m=(m_{1},\dots,m_{r})\in \Z^r$,
\begin{eqnarray*}
\g_{\m}=\set{a\in\g}{\sigma_j(a)=\rtu{N_j}^{m_j}a\ \mbox{ for  } 1\leq j\leq r }.
\end{eqnarray*}


Set
\begin{eqnarray*}
\Lieu^{\ge 0}&=&\left(\mathop{\bigoplus}\limits_{\m\in\Zr}\g_\m\otimes
\bdt^\m\C\left[t_0 \right] \right)\oplus \C\cent,\nonumber\\
\Lieu^{-}&=&\mathop{\bigoplus}\limits_{\m\in\Zr}\g_\m\otimes
\bdt^\m t_0^{-1}\C \left[t_0^{-1} \right],
\end{eqnarray*}
which are subalgebras of $\Lieu$.
We have
\begin{eqnarray*}
\Lieu=\Lieu^{\ge 0}\oplus \Lieu^{-}.
\end{eqnarray*}

Let $\chi: \widehat{G}_{+}\rightarrow \C^{\times}$ be the group homomorphism, i.e., 
a linear character of $ \widehat{G}_{+}$, uniquely determined by
\begin{eqnarray*}
\chi (\hat{\sigma}_i)= \omega_{N_i} \   \   \  \mbox{ for } 1\le i\le r.
\end{eqnarray*}
More generally, for any $\m=(m_1,\dots,m_r)\in \Z^r$, we define $\chi^{\m}$ to be the linear character of $\widehat{G}_{+}$, 
uniquely determined by 
\begin{eqnarray}
\chi^{\m} (\hat{\sigma}_i)= \omega_{N_i}^{m_i} \   \   \  \mbox{ for } 1\le i\le r.
\end{eqnarray}
It can be readily seen that for $\m,\n\in \Z^r$, $\chi^{\m}=\chi^{\n}$ if and only if $\m-\n \in \Lambda({\bf N})$.
For $\n\in \Z^r$, using the character $\chi^{\n}$ we have
 \begin{eqnarray}
 \g_{\n}=\{ a\in \g\ |\ \gamma (a)=\chi^{\n}(\gamma)a\   \   \    \mbox{ for }\gamma\in \widehat{G}_{+}\}.
 \end{eqnarray}
It follows that for $\m,\n\in \Z^r$ with $\m-\n\in \Lambda({\bf N})$, we have $\g_{\m}=\g_{\n}$. 
For any $[\n]\in \Z^r/\Lambda({\bf N})$ with $\n\in \Z^r$, we define $\g_{[\n]}=\g_{\n}$. Then
\begin{eqnarray}
\g=\bigoplus_{[\n]\in \Z^r/\Lambda({\bf N})} \g_{[\n]}.
\end{eqnarray}

For $a\in \g$ and $\m\in \Z^r$, set
\begin{eqnarray}
a_{(\m)}=\frac{1}{N_{+}}\sum_{\gamma\in \widehat{G}_{+}}\chi^{\m}(\gamma^{-1})\gamma(a),
\end{eqnarray}
where $N_{+}=N_1N_2\cdots N_r=|\widehat{G}_{+}|$.
For $a\in \g_{\n}$ with $\n\in \Z^r$, we have $a_{(\m)}=a$ for $\m\in \n+\Lambda({\bf N})$ and
$a_{(\m)}=0$ for $\m\notin \n+\Lambda({\bf N})$.
Then $\g_{\m}=\{ a_{(\m)}\  |\  a\in \g\}$. 

For  $a\in \g,\ (m_0,\m)\in \Z\times \Z^r$, set
\begin{eqnarray}
a^{\Lieu}(m_0,\m)=a_{(\m)}\otimes t_0^{m_0}\bdt^{\m}\in \Lieu.
\end{eqnarray}
Furthermore, for $a\in \g$, set
\begin{eqnarray}
a^{\Lieu}(x_0,\x)&=&\sum_{(m_0,\m)\in \Z\times \Z^r}a^{\Lieu}(m_0,\m)x_0^{-m_0-1}\x^{-\m}\nonumber\\
&=&\sum_{(m_0,\m)\in \Z\times \Z^r}(a_{(\m)}\otimes t_0^{m_0}\bdt^{\m})x_0^{-m_0-1}\x^{-\m}.
\end{eqnarray}
In particular, if $a\in \g_{\rsymbol{k}}$ with $\rsymbol{k}\in \Zr$, we have
\begin{eqnarray}
a^{\Lieu}(x_0,\x)&=&\sum_{m_0\in \Z,\m\in \rsymbol{k}+\Lambda(\Nfactor)}
\left(a\otimes t_0^{m_0}\bdt^{\m}\right)x_0^{-m_0-1}\x^{-\m}.
\end{eqnarray}
Let $a\in \g_\rsymbol{k}$, $b\in \g_\rsymbol{l}$ with $\rsymbol{k}, \rsymbol{l}\in \Z^r$.
Then
\begin{eqnarray}\label{eq:commutatorUntwisted}
&&\left[a^{\Lieu}(x_0,\m),b^{\Lieu}(y_0,\y)
\right]=\y^{\m}\left([a,b]^{\Lieu}(y_0,\y)\dfunc{x_0}{y_0}{x_0}
+\<a,b\> \cent \frac{\partial}{\partial y_0}
     \dfunc{x_0}{y_0}{x_0}\right)  \nonumber\\
     &&
\end{eqnarray}
for $\m\in\rsymbol{k}+\Lambda(\Nfactor)\subset \Zr$. Note that $\left[a^{\Lieu}(x_0,\m),b^{\Lieu}(y_0,\y)
\right]=0$ for $\m\notin\rsymbol{k}+\Lambda(\Nfactor)$.

An $\Lieu$-module $W$ is said to be of {\em level $\ell\in \C$} if $\cent$ acts on $W$ as scalar $\ell$
and $W$ is called a {\em restricted module} if
\begin{eqnarray*}
a^{\Lieu}(x_0,\x)\in\ESall{W}\   \   \mbox{ for  all }a\in\g.
\end{eqnarray*}

We have:

\begin{lem}\label{lem:ActingOnUW}
Let $W$ be any restricted $\tau$-module of level $\ell\in \C$. Then
$\<U_W\>$ is a restricted $\Lieu$-module of level $\ell$ with $a^{\Lieu}(y_0,\y)$ acting as
$\Ye{a^{\tau}(x_0,\x)}{y_0}{\y}$ for $a\in \g$
 and with $\cent$ acting as scalar $\ell$.
Furthermore, $U_{W}+\C 1_{W}$ is an $\Lieu^{\ge 0}$-submodule of $\<U_{W}\>$, where
for $a\in {\g}_{\rsymbol{k}}, b\in \g$, $j\in \N$, $\m\in \rsymbol{k}+\Lambda(\Nfactor)\subset\Zr$,
\begin{eqnarray*}
&&\left(a\otimes t_0^j \bdt^{\m}\right)\cdot1_{W}=a(x_0,\x)^{\tau}_{j,\m} 1_{W}=0,\nonumber\\
&&\left(a\otimes t_0^j \bdt^{\m}\right)\cdot b^{\tau}(x_0,\x)=a^{\tau}(x_0,\x)_{j,\m} b^{\tau}(x_0,\x)
=\left\{\begin{array}{ll} [a,b]^{\tau}(x_0,\x) & \text{if }j=0\\ \<a,b\>\ell 1_W &\text{if }j=1\\ 0 & \text{if }j\geq 2.   \end{array}\right.
\end{eqnarray*}
\end{lem}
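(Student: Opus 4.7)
The plan is to transfer the $\Lie$-commutation relations on $W$ into bracket relations among vertex operators of $\<U_W\>$ via Corollary \ref{prop:calculationProp0001}, and then observe that the resulting formulas are precisely the defining relations of $\Lieu$ with $\cent$ acting as the scalar $\ell$. The $\Lieu^{\ge 0}$-submodule structure on $U_W+\C 1_W$ will drop out of the same corollary combined with the vacuum axiom of $\<U_W\>$.

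First I would define the prospective action of $\Lieu$ on $\<U_W\>$ by letting $\cent$ act as $\ell$ and, for each $a\in\g$, letting the generating series $a^{\Lieu}(x_0,\x)$ act as $\Ye{a^{\Lie}(t_0,\bdt)}{x_0}{\x}$, so that each mode $a^{\Lieu}(m_0,\m)$ acts as the corresponding coefficient. By Theorem \ref{thm:ConceptionalConstruction}, this generating series lies in $\ESall{\<U_W\>}$, so each mode is indeed an endomorphism of $\<U_W\>$ and the restrictedness of the module is automatic. For $a\in\g_{\rsymbol{k}}$, the $\m$-coefficient of $\Ye{a^{\Lie}(t_0,\bdt)}{x_0}{\x}$ vanishes unless $\m\in\rsymbol{k}+\Lambda(\Nfactor)$, matching the fact that $a^{\Lieu}(m_0,\m)=0$ in $\Lieu$ for such $\m$.

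To see that this really is a Lie-algebra action, I would apply Corollary \ref{prop:calculationProp0001} to the commutator formula (\ref{eq:commutator}) on $W$. Since $\cent$ acts on $W$ as the scalar $\ell$, the right-hand side of (\ref{eq:commutator}) satisfies the hypothesis of that corollary with $s=k_0$, $N=N_0$, $c_0=[a,b]^{\Lie}$, $c_1=\langle a,b\rangle\ell\cdot 1_W$ and $c_j=0$ for $j\ge 2$. Its conclusion, combined with $\Ye{1_W}{y_0}{\y}=\mathrm{id}_{\<U_W\>}$, then yields exactly the bracket prescribed by (\ref{eq:commutatorUntwisted}) with $\cent$ replaced by $\ell$. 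Thus $\<U_W\>$ becomes a restricted $\Lieu$-module of level $\ell$ under the action just defined.

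For the final claim, the vacuum axiom $\Ye{v}{z_0}{\z}1_W\in\<U_W\>[[z_0,z_1,\ldots,z_r]]$ of an \rtva{} immediately forces every non-negative mode to annihilate $1_W$, giving $(a\otimes t_0^j\bdt^\m)\cdot 1_W=a^{\Lie}(x_0,\x)_{j,\m}\cdot 1_W=0$ for $j\ge 0$. The formulas for the action on $b^{\Lie}(x_0,\x)$ are just the first half of the conclusion of Corollary \ref{prop:calculationProp0001} applied as above: $a^{\Lie}(x_0,\x)_{0,\m}b^{\Lie}(x_0,\x)=[a,b]^{\Lie}(x_0,\x)\in U_W$, $a^{\Lie}(x_0,\x)_{1,\m}b^{\Lie}(x_0,\x)=\langle a,b\rangle\ell\cdot 1_W\in\C 1_W$, and $a^{\Lie}(x_0,\x)_{j,\m}b^{\Lie}(x_0,\x)=0$ for $j\ge 2$, establishing both the $\Lieu^{\ge 0}$-stability of $U_W+\C 1_W$ and the explicit action. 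The only delicate point in the whole argument is keeping track of the fractional exponent $k_0/N_0$, which is absorbed into Corollary \ref{prop:calculationProp0001}, so nothing beyond bookkeeping is required.
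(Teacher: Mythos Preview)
Your proposal is correct and follows essentially the same route as the paper: apply Corollary \ref{prop:calculationProp0001} to the commutator identity (\ref{eq:commutator}) on $W$ (with $\cent$ acting as $\ell$) to obtain both the bracket relation (\ref{eq:commutatorUntwisted}) on $\<U_W\>$ and the explicit products $a^{\tau}(x_0,\x)_{j,\m}b^{\tau}(x_0,\x)$, and note the vanishing of the $\m$-components for $\m\notin\rsymbol{k}+\Lambda(\Nfactor)$. Your treatment of the action on $1_W$ via the creation axiom is a minor cosmetic variation---the paper implicitly gets it from the same corollary applied with $b(x_0,\x)=1_W$---but the substance is identical.
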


 \begin{proof} Let $a\in {\g}_{\rsymbol{k}}, b\in \g$ with $\rsymbol{k}\in \Z^r$ and let
$\m\in\rsymbol{k}+\Lambda(\Nfactor)\subset \Zr$.
 Combining (\ref{eq:commutator}) with Corollary \ref{prop:calculationProp0001}, we get
\begin{eqnarray}
&&\left[ \Ye{a^{\tau}(t_0,\bdt)}{x_0}{\m},\Ye{b^{\tau}(t_0,\bdt)}{y_0}{\y} \right]\nonumber\\
&=&\y^{\m}\left(
    \Ye{\left[a,b\right]^{\tau}(t_0,\bdt)}{y_0}{\y}\dfunc{x_0}{y_0}{x_0}+
    \<a,b\>\ell\frac{\partial}{\partial y_0}  \dfunc{x_0}{y_0}{x_0}
    \right).
\end{eqnarray}
On the other hand, notice that from (\ref{eq:defofYe}), we have
$\Ye{a^{\tau}(t_0,\bdt)}{x_0}{\m}=0$ for $\m\notin \rsymbol{k}+\Lambda(\Nfactor)$.
Then the first assertion follows. From Corollary \ref{prop:calculationProp0001}, the second assertion also follows.
 \end{proof}




Notice that $\widehat{L_{r+1}}(\g)$ is a $\Z$-graded Lie algebra
with $\deg (a\otimes t_0^k\bdt^\m)=-k$ for $k\in \Z,\ \m\in \Zr$ and $\deg \cent=0$,
where  $\Lieu$, $\Lieu^{\ge 0}$, and $\Lieu^{-}$ are all $\Z$-graded subalgebras.
From Lemma 4.1 of \cite{LTW}, we immediately have:

\begin{lem}\label{lem:BorelMod}
Let $\ell$ be a complex number. Then there exists an $\Lieu^{\ge 0}$-module structure on $\g\oplus \C$
with $\cent$ acting as scalar $\ell$ and with
\begin{eqnarray}
&&\left(a\otimes t_0^j \bdt^{\m}\right)\cdot \C=0,\nonumber\\
&&\left(a\otimes t_0^j \bdt^{\m}\right)\cdot b
=\left\{\begin{array}{ll} [a,b] & \text{if }j=0\\ \<a,b\>\ell &\text{if }j=1\\ 0 & \text{if }j\geq 2    \end{array}\right.
\end{eqnarray}
for $a\in\g_{\rsymbol{k}},\ b\in \g$, $j\in \N$, $\m\in \rsymbol{k}+\Lambda(\Nfactor)\subset \Zr$.
Furthermore, $\g\oplus\C$ is an $\N$-graded $\Lieu^{\ge 0}$-module with $\deg \C=0$ and $\deg \g=1$.
\end{lem}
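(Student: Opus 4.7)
The plan is to prove this by directly checking the Lie-algebra module axiom $[X,Y]\cdot v = X(Yv) - Y(Xv)$ for $X,Y \in \Lieu^{\ge 0}$ and $v \in \g \oplus \C$. A preliminary step is to analyze the bracket in $\Lieu^{\ge 0}$. Any non-central element has the form $a\otimes t_0^{j}\bdt^{\m}$ with $j \in \N$, $a \in \g_{\rsymbol{k}}$, and $\m \in \rsymbol{k} + \Lambda(\Nfactor)$, and the central cocycle $j_1\<a_1,a_2\> \delta_{j_1+j_2,0}\delta_{\m_1+\m_2, 0}\cent$ vanishes identically on $\Lieu^{\ge 0}$, since $j_1, j_2 \ge 0$ together with $j_1+j_2=0$ force $j_1=0$. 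Hence inside $\Lieu^{\ge 0}$ one has
$[a_1\otimes t_0^{j_1}\bdt^{\m_1},\, a_2\otimes t_0^{j_2}\bdt^{\m_2}] = [a_1,a_2]\otimes t_0^{j_1+j_2}\bdt^{\m_1+\m_2}$,
which is a substantial simplification that reduces the verification to a few cases.

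Next, I would split into cases according to $j_1+j_2$ and the type of $v$. For $v \in \C$ both sides vanish since every non-central element of $\Lieu^{\ge 0}$ kills $\C$ and $\cent$ commutes with everything. For $v = c \in \g$: the case $j_1 = j_2 = 0$ reduces to the Jacobi identity $[a_1,[a_2,c]] - [a_2,[a_1,c]] = [[a_1,a_2],c]$ in $\g$; the case $j_1 + j_2 = 1$ (say $j_1 = 1$, $j_2 = 0$) reduces to the invariance identity $\<[a_1,a_2], c\> = -\<a_2,[a_1,c]\>$, which holds for the Killing form; and the cases $j_1 + j_2 \ge 2$ make both sides automatically zero, since the action kills $\g$ as soon as the $t_0$-degree exceeds one, and the action of the other factor on the $\C$-valued output of the first is zero. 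Compatibility with $\cent$ is immediate from centrality and $\cent\cdot v = \ell v$. The $\N$-grading assertion is a bookkeeping check: with $\deg\C = 0$, $\deg\g = 1$, and $\deg(a\otimes t_0^j\bdt^{\m}) = -j$, the prescribed action sends $\g \to \g$ for $j=0$, $\g \to \C$ for $j=1$, and to zero otherwise, so the degree shifts match.

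The main obstacle is the $j_1+j_2=1$ case, since this is the one that genuinely glues the Lie-algebra action on $\g$ to the scalar action on $\C$; the computation itself is a one-line use of invariance, but one must simultaneously ensure that $[a_1,a_2]$ and $\<a_1,a_2\>$ respect the indexing constraints, namely $[\g_{\rsymbol{k}},\g_{\rsymbol{l}}] \subseteq \g_{\rsymbol{k}+\rsymbol{l}}$ and $\<\g_{\rsymbol{k}},\g_{\rsymbol{l}}\> = 0$ unless $\rsymbol{k}+\rsymbol{l}\in \Lambda(\Nfactor)$, both of which follow because the $\sigma_j$ are commuting automorphisms preserving $\<\cdot,\cdot\>$. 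Given how closely everything parallels the untwisted case, an alternative route is simply to invoke Lemma~4.1 of \cite{LTW} for $\widehat{L_{r+1}}(\g)^{\ge 0}$ and restrict the resulting module structure to the fixed-point subalgebra $\Lieu^{\ge 0}$.
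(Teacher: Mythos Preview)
Your proposal is correct. The paper itself gives no argument at all: it simply records that the lemma follows immediately from Lemma~4.1 of \cite{LTW}, which is exactly the alternative route you mention in your final sentence. Your main approach---directly verifying $[X,Y]\cdot v = X(Yv)-Y(Xv)$ by reducing to the Jacobi identity when $j_1=j_2=0$, to invariance of the Killing form when $j_1+j_2=1$, and to a triviality when $j_1+j_2\ge 2$---is essentially the content that the cited lemma packages; you have unpacked the reference rather than invoked it. The direct computation makes the note self-contained, while the citation keeps the paper short; there is no gap either way.
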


For $\ell\in \C$, denote by $\left(\g\oplus \C\right)_\ell$ the $\Lieu^{\ge 0}$-module
obtained in Lemma \ref{lem:BorelMod}. Then form an induced module
\begin{eqnarray}
\T=U\left( \Lieu \right)\otimes_{U\left(\Lieu^{\ge 0}\right)} \left(\g\oplus \C\right)_\ell.
\end{eqnarray}
In view of the P-B-W theorem, we have
\begin{eqnarray*}
\T=U\left( \Lieu^{-} \right)\otimes \left(\g\oplus \C\right)
\end{eqnarray*}
as a vector space. Consequently, $\T$ is an $\N$-graded $\Lieu$-module.
It follows that $\T$ is a restricted $\Lieu$-module of level $\ell$.
Set
\begin{eqnarray}
\vac=1\otimes 1\in \T.
\end{eqnarray}
Identify $\g$ with the subspace $1\otimes \g$ through the map $a\mapsto 1\otimes a$.

\begin{coro}\label{cmodule-homomorphism}
Let $W$ be a restricted $\Lie$-module of level $\ell$. Then  there exists
an  $\Lieu$-module homomorphism
\begin{eqnarray}\label{eq:LieUntwistedModHom}
\phi_{W}: \  \T\rightarrow \<U_W\>\subset \ESalltwisted{W}
\end{eqnarray}
such that
$$\phi_{W}(a+\mu)=a^{\tau}(x_0,\x)+\mu 1_{W}\   \   \mbox{ for }a\in\g,\ \mu\in \C.$$
\end{coro}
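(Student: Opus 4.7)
The plan is to use the universal property of the induced module $V_{\mathcal{L}}(\ell,0) = U(\mathcal{L}) \otimes_{U(\mathcal{L}^{\geq 0})} (\mathfrak{g} \oplus \mathbb{C})_\ell$, reducing the construction of $\phi_W$ to the existence of an $\mathcal{L}^{\geq 0}$-module homomorphism from $(\mathfrak{g} \oplus \mathbb{C})_\ell$ into $\langle U_W \rangle$.

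First, I would define a linear map
\begin{eqnarray*}
\psi: (\mathfrak{g} \oplus \mathbb{C})_\ell \longrightarrow U_W + \mathbb{C} 1_W \subset \langle U_W \rangle,
\qquad a + \mu \mapsto a^{\tau}(x_0,\mathbf{x}) + \mu\, 1_W
\end{eqnarray*}
for $a \in \mathfrak{g}$, $\mu \in \mathbb{C}$. By Lemma \ref{lem:ActingOnUW}, $\langle U_W \rangle$ is a restricted $\mathcal{L}$-module of level $\ell$ with $a^{\mathcal{L}}(y_0,\mathbf{y})$ acting as $Y_{\mathcal{E}}(a^\tau(x_0,\mathbf{x});y_0,\mathbf{y})$, and moreover $U_W + \mathbb{C} 1_W$ is stable under $\mathcal{L}^{\geq 0}$ with an explicitly computed action.

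Next, I would verify that $\psi$ intertwines the $\mathcal{L}^{\geq 0}$-actions by directly comparing the formulas. On the source, Lemma \ref{lem:BorelMod} prescribes $(a \otimes t_0^j \mathbf{t}^{\mathbf{m}})\cdot b = [a,b],\ \langle a,b\rangle\ell,\ 0$ for $j = 0,\ 1,\ \geq 2$ respectively (with $\mathbf{m} \in \mathbf{k} + \Lambda(\mathbf{N})$ when $a \in \mathfrak{g}_{\mathbf{k}}$), and kills the $\mathbb{C}$ summand; on the target, Lemma \ref{lem:ActingOnUW} gives exactly the same formulas applied to $b^\tau(x_0,\mathbf{x})$ and to $1_W$. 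The central element $\mathfrak{c}$ acts as scalar $\ell$ on both sides. Hence $\psi$ is a homomorphism of $\mathcal{L}^{\geq 0}$-modules.

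Finally, by Frobenius reciprocity for the induction $U(\mathcal{L}) \otimes_{U(\mathcal{L}^{\geq 0})} -$, the map $\psi$ lifts uniquely to an $\mathcal{L}$-module homomorphism
\begin{eqnarray*}
\phi_W: V_{\mathcal{L}}(\ell,0) \longrightarrow \langle U_W \rangle, \qquad u \otimes v \mapsto u \cdot \psi(v),
\end{eqnarray*}
and this $\phi_W$ satisfies $\phi_W(a + \mu) = a^\tau(x_0,\mathbf{x}) + \mu\, 1_W$ by construction. There is no real obstacle here: the only nontrivial content lies in verifying that the two bracket formulas in Lemmas \ref{lem:BorelMod} and \ref{lem:ActingOnUW} agree term by term, which is immediate from inspection, and the existence of $\phi_W$ then follows formally from the induced module construction.
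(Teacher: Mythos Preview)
Your proposal is correct and follows essentially the same route as the paper: first check that $a+\mu\mapsto a^{\tau}(x_0,\x)+\mu 1_W$ is an $\mathcal{L}^{\ge 0}$-module homomorphism from $(\g\oplus\C)_\ell$ into $\langle U_W\rangle$ by matching the formulas in Lemmas~\ref{lem:BorelMod} and~\ref{lem:ActingOnUW}, then invoke the universal property of the induced module $V_{\mathcal{L}}(\ell,0)$ to extend it to an $\mathcal{L}$-module map. The paper's proof is simply a terser version of exactly this argument.
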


\begin{proof} It  follows from Lemma \ref{lem:ActingOnUW} that there exists
an $\Lieu^{\ge 0}$-module  homomorphism $\phi_{W}^{0}$
from $\left(\g\oplus\C\right)_\ell$ to $\<U_W\>$ such that
$$\phi_{W}^{0}(a+\mu)=a^{\tau}(x_0,\x)+\mu 1_{W}\   \   \mbox{ for }a\in\g,\ \mu\in \C.$$
Then from the construction of $\T$, we see that
 $\phi_{W}^{0}$ extends uniquely to an $\Lieu$-module homomorphism
 $\phi_{W}: \  \T\rightarrow \<U_W\>\subset \ESalltwisted{W}$, as desired.
\end{proof}

We have:

\begin{thm}
Let $\ell$ be any complex number. Then there exists an \rtva{} structure on $\T$, which is uniquely determined by
the conditions that $\vac$ is the vacuum vector and that
\begin{eqnarray}\label{eq:TVAaction}
\Y{a}{x_0}{\x}=a^{\Lieu}(x_0,\x)\quad\fortext a\in \g.
\end{eqnarray}
\end{thm}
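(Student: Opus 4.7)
The plan is to invoke the conceptual construction of Section \ref{Section:ConceptionalConstruction} with $N=1$, taking the vector space $W:=V_{\Lieu}(\ell,0)$ itself. By construction, $W$ is a restricted $\Lieu$-module of level $\ell$, and since $N=1$ the $\Z_{N}$-grading is trivial and $\mathcal{E}(W,r;1)=\ESall{W}$. Set
\[
U_W=\Span\{a^{\Lieu}(x_0,\x)\mid a\in\g\}\subset \ESall{W}.
\]
By the commutator formula (\ref{eq:commutatorUntwisted}), $U_W$ is local, so by Corollary \ref{cgenerated-algebra} the subspace $\<U_W\>$ is an $(r+1)$-toroidal vertex algebra on which $W$ is a faithful module satisfying $Y_W(\alpha;z_0,\z)=\alpha(z_0,\z)$.

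Next I would establish the untwisted counterparts of Lemma \ref{lem:ActingOnUW} and Corollary \ref{cmodule-homomorphism}, proved by the same arguments but using Corollary \ref{prop:calculationProp0001} with $s=0$. These furnish a restricted $\Lieu$-module structure of level $\ell$ on $\<U_W\>$ in which $a^{\Lieu}(y_0,\y)$ acts as $\Ye{a^{\Lieu}(x_0,\x)}{y_0}{\y}$, and they exhibit $U_W+\C 1_W$ as an $\Lieu^{\ge 0}$-submodule matching the module $(\g\oplus\C)_\ell$ of Lemma \ref{lem:BorelMod} via the identification $a+\mu\leftrightarrow a^{\Lieu}(x_0,\x)+\mu 1_W$. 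The universal property of the induced module $V_{\Lieu}(\ell,0)=U(\Lieu)\otimes_{U(\Lieu^{\ge 0})}(\g\oplus\C)_\ell$ then yields a unique $\Lieu$-module homomorphism
\[
\phi:V_{\Lieu}(\ell,0)\longrightarrow \<U_W\>,\quad \vac\mapsto 1_W,\quad a\mapsto a^{\Lieu}(x_0,\x)\ \text{for}\ a\in\g.
\]

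The heart of the matter is to show that $\phi$ is a linear isomorphism, for then pulling back the TVA structure on $\<U_W\>$ along $\phi$ produces the desired $(r+1)$-toroidal vertex algebra structure on $V_{\Lieu}(\ell,0)$, and the conditions $Y(\vac;x_0,\x)=1$ and $Y(a;x_0,\x)=a^{\Lieu}(x_0,\x)$ for $a\in\g$ hold by construction. Surjectivity is essentially immediate: the image of $\phi$ is an $\Lieu$-submodule of $\<U_W\>$ containing $1_W$ and $U_W$, and because iterated TVA-mode operations from $U_W$-generators coincide with the $U(\Lieu)$-action, the $U(\Lieu)$-orbit of $\{1_W\}\cup U_W$ fills out all of $\<U_W\>$. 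Injectivity is the main obstacle: exploiting the PBW decomposition $V_{\Lieu}(\ell,0)\cong U(\Lieu^-)\otimes (\g\oplus\C)$ together with the faithfulness of $W$ as a $\<U_W\>$-module, I would evaluate $\phi(v)\in\ESall{W}$ on the vacuum vector $\vac\in W$ and use the free action of $U(\Lieu^-)$ on $\vac\in W=V_{\Lieu}(\ell,0)$ together with the linear independence of the generating vertex operators $\{1_W\}\cup\{a^{\Lieu}(x_0,\x)\mid a\in\g\}$ in $\ESall{W}$ to recover $v$ from $\phi(v)$. Finally, uniqueness of the TVA structure follows because $\vac$ together with $\g$ generate $V_{\Lieu}(\ell,0)$ under the vertex operations, so the Jacobi identity uniquely extends $Y$ from the prescribed generators to all of $V_{\Lieu}(\ell,0)$.
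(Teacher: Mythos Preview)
Your setup coincides with the paper's up to the construction of the $\Lieu$-module homomorphism $\phi:\T\to\<U_W\>$ with $W=\T$ and $N_0=1$. At that point, however, the arguments diverge. The paper does \emph{not} attempt to show that $\phi$ is a linear isomorphism. Instead it simply \emph{defines} $Y(v;x_0,\x):=\phi(v)\in\ESall{\T}$, verifies on generators that $Y(a;x_0,\x)=a^{\Lieu}(x_0,\x)$, checks locality of $\{Y(a;x_0,\x)\mid a\in\g\}\cup\{1_W\}$, establishes the iterate relation
\[
Y(Y(a;z_0,\z)v;y_0,\y)=\Res_{x_0}\Bigl(\dfunc{z_0}{x_0-y_0}{z_0}Y(a;x_0,\z\y)Y(v;y_0,\y)-\dfunc{z_0}{y_0-x_0}{-z_0}Y(v;y_0,\y)Y(a;x_0,\z\y)\Bigr)
\]
for $a\in\g$ and arbitrary $v\in\T$ (this is exactly where the $\Lieu$-equivariance of $\phi$ is used), and then invokes Theorem~3.10 of \cite{LTW}, a ``local system'' criterion saying that these data already force $(\T,Y,\vac)$ to be an \rtva{}. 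No injectivity of $\phi$ is required.

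Your route therefore has a genuine gap precisely at the point you flag as ``the main obstacle''. The sketch for injectivity --- evaluating $\phi(v)$ at $\vac$ and invoking the free $U(\Lieu^{-})$-action on $W$ --- does not go through as stated, because the $\Lieu$-action on $\<U_W\>$ is through $Y_{\mathcal E}$-modes, not through the original $\Lieu$-action on $W$; there is no reason the composite $\<U_W\>\hookrightarrow\ESall{W}\xrightarrow{\alpha\mapsto\alpha\cdot\vac}W[[\varr{x}]]((x_0))$ should intertwine the two $\Lieu$-actions in a way that lets you read off PBW monomials. Nor is there an obvious ``creation map'' $\<U_W\>\to W$ inverting $\phi$ in the toroidal setting (unlike ordinary vertex algebras, the series $Y(v;x_0,\x)\vac$ has infinitely many nontrivial $\x$-components, so a simple constant-term extraction does not recover $v$). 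Closing this gap would require a substantially new argument; the paper's use of the cited construction theorem is exactly what sidesteps it.
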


\begin{proof}
Since $\T=U\left( \Lieu \right)\left(\g+\C\vac\right)$, the uniqueness follows immediately.
In the following, we establish the existence by applying Theorem 3.10 in \cite{LTW}.
Temporarily, take $\sigma_0=1$ (with $N_{0}=1$) and $W=\T$ in (\ref{eq:LieUntwistedModHom}).
Then we have $\tau=\Lieu$ and
\begin{eqnarray*}
a^{\tau}(x_0,\x)=a^{\Lieu}(x_0,\x)\in\ESall{\T}\  \  \mbox{ for }a\in\g.
\end{eqnarray*}
From Corollary \ref{cmodule-homomorphism}, we have an $\Lieu$-homomorphism
$\phi_{W}$ from $\T$ to $\<U_{W}\>$, which we alternatively denote by $\phi_{x_0,\x}$.
Define
\begin{eqnarray}
\Y{\cdot}{x_0}{\x}:&&\T\rightarrow \ESall{\T}\nonumber\\
                && v\mapsto \phi_{x_0,\x}(v).
\end{eqnarray}
For $a\in\g$, we have
\begin{eqnarray*}
\Y{a}{x_0}{\x}=\phi_{x_0,\x}(a)=a^{\Lieu}(x_0,\x).
\end{eqnarray*}
Then $\set{\Y{a}{x_0}{\x}}{a\in \g}\cup \{1_{W}\}$ is local. For $a\in \g$, $v\in \T$, we have
\begin{eqnarray*}
&&\Y{\Y{a}{z_0}{\z}v}{y_0}{\y}\\
&=&\phi_{y_0,\y}\left( a^{\Lieu}(z_0,\z)v \right)\\
&=&\Ye{a^{\Lieu}(y_0,\y)}{z_0}{\z}\phi_{y_0,\y}(v)\\
&=&\Res_{x_0}\left(\dfunc{z_0}{x_0-y_0}{z_0}a^{\Lieu}(x_0,\z\y)\phi_{y_0,\y}(v)-\dfunc{z_0}{y_0-x_0}{-z_0}\phi_{y_0,\y}(v)a^{\Lieu}(x_0,\z\y)\right)\\
&=&\Res_{x_0}(\dfunc{z_0}{x_0-y_0}{z_0}\Y{a}{x_0}{\z\y}\Y{v}{y_0}{\y}\\
&&\quad  \quad
-\dfunc{z_0}{y_0-x_0}{-z_0}\Y{v}{y_0}{\y}\Y{a}{x_0}{\z\y}).
\end{eqnarray*}
It then follows fromTheorem 3.10 in \cite{LTW} that $\left( \T,Y,\vac \right)$ carries the structure of
an \rtva{}.
\end{proof}

Recall that $\g\oplus\C\subset \T$ and
$\sigma_{0},\dots,\sigma_r$ are automorphisms of $\g$. 

\begin{lem}\label{lauto-sigma0}
For each $0\le i\le r$, there is an automorphism $\tilde{\sigma}_{i}$ of $\T$, 
which extends the automorphism $\sigma_{i}$ of $\g$ uniquely.
\end{lem}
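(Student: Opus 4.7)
The plan is to construct $\tilde{\sigma}_i$ in three stages: first as a Lie algebra automorphism of $\Lieu$, then as a linear automorphism of the induced module $\T$ compatible with this Lie algebra automorphism, and finally verify that it preserves the toroidal vertex algebra operator $Y$.

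Because $\sigma_0,\dots,\sigma_r$ pairwise commute, $\sigma_i$ preserves every graded component $\g_\m$ of $\g$. Define $\tilde{\sigma}_i:\Lieu\to\Lieu$ by $\tilde{\sigma}_i(\cent)=\cent$ and $\tilde{\sigma}_i(a\otimes t_0^{m_0}\bdt^\m)=\sigma_i(a)\otimes t_0^{m_0}\bdt^\m$ for $a\in\g_\m$. The defining bracket of $\Lieu$ is preserved because $\sigma_i$ preserves both $[\cdot,\cdot]$ and $\<\cdot,\cdot\>$; hence $\tilde{\sigma}_i$ is a Lie algebra automorphism, and it clearly stabilizes the subalgebras $\Lieu^{\ge 0}$ and $\Lieu^{-}$.

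Let $\sigma_i$ also act on $\g\oplus\C$ by $\sigma_i$ on $\g$ and the identity on $\C$. Inspecting the formulas of Lemma \ref{lem:BorelMod} for the $\Lieu^{\ge 0}$-action on $(\g\oplus\C)_\ell$, the three cases (bracket term, bilinear-form term, central action) all yield the intertwining $\sigma_i(X\cdot v)=\tilde{\sigma}_i(X)\cdot\sigma_i(v)$ for $X\in\Lieu^{\ge 0}$, $v\in\g\oplus\C$, using only that $\sigma_i$ respects $[\cdot,\cdot]$ and $\<\cdot,\cdot\>$. By the universal property of induction, this compatibility lifts to a unique linear map $\tilde{\sigma}_i:\T\to\T$ satisfying $\tilde{\sigma}_i(X\cdot w)=\tilde{\sigma}_i(X)\cdot\tilde{\sigma}_i(w)$ for all $X\in U(\Lieu)$, $w\in\T$, fixing $\vac=1\otimes 1$ and restricting to $\sigma_i$ on $\g=1\otimes\g\subset\T$. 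Invertibility is obtained by applying the same construction to $\sigma_i^{-1}$.

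For the final step, $\T$ is generated as an \rtva{} by $\vac$ and $\g$: since $Y(a;m_0,\m)=a^\Lieu(m_0,\m)$ acts on $\vac$ and on $1\otimes\g$ to produce, via PBW, all of $U(\Lieu^{-})(\g\oplus\C)=\T$. Hence it suffices to check $\tilde{\sigma}_i(Y(a;x_0,\x)w)=Y(\sigma_i a;x_0,\x)\tilde{\sigma}_i(w)$ for $a\in\g$, $w\in\T$. Using $\sigma_i(a_{(\m)})=(\sigma_i a)_{(\m)}$ in $\g$, one finds $\tilde{\sigma}_i(a^\Lieu(m_0,\m))=(\sigma_i a)^\Lieu(m_0,\m)$ inside $\Lieu$, so the desired identity reduces to the $\Lieu$-module intertwining already proven. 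Uniqueness of the extension follows from the same generation property. The main obstacle is the compatibility verification in the second step, which, though elementary, must be executed separately for each of the three summands of the $\Lieu^{\ge 0}$-action and requires careful use of the $\m$-grading of $\g$.
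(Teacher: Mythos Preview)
Your proof is correct and follows essentially the same route as the paper's: lift $\sigma_i$ to a Lie algebra automorphism of $\Lieu$, check its compatibility with the $\Lieu^{\ge 0}$-module structure on $(\g\oplus\C)_\ell$, induce a linear automorphism of $\T$ via the induced-module construction, and then use generation of $\T$ by $\g\cup\{\vac\}$ to promote it to a toroidal vertex algebra automorphism. The only cosmetic difference is that the paper phrases the induction step as the descent of $\sigma_i\otimes\bar{\sigma}_i$ from $U(\Lieu)\otimes(\g\oplus\C)$ to the quotient $\T$, while you invoke the universal property of the induced module; these are equivalent.
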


\begin{proof} Let $i\in \{ 0,1,\dots,r\}$.
As $\T$  as an \rtva{} is generated by $\g$ and $\mathbf{1}$, the uniqueness is clear.
It remains to establish the existence. 
Since the automorphism $\sigma_i$ of $\g$ preserves the bilinear form,
we can and we should view $\sigma_{i}$ as an automorphism of the Lie algebra $\widehat{L_{r+1}}(\g)$ by
\begin{eqnarray*}
\sigma_{i}(a\otimes t_0^{m_0}\bdt^\m+\mu\cent)=\sigma_i(a)\otimes
t_0^{m_0}\bdt^\m+\mu\cent
\end{eqnarray*}
for $a\in{\g},\  (m_{0},\m)\in\mathbb{Z}\times {\mathbb{Z}}^{r},\  \mu\in\C$. 
With all $\sigma_{j}$ $(0\leq j\leq r)$ commuting, $\sigma_i$ preserves the subalgebra $\Lieu$.
This induces an automorphism of the universal enveloping algebra $\mathcal{U}(\Lieu)$, 
also denoted by $\sigma_{i}$. It is clear that $\sigma_{i}$ preserves the subalgebra $\Lieu^{\ge 0}$.

Recall from Lemma \ref{lem:BorelMod} the $\Lieu^{\ge 0}$-module structure on $\g\oplus \C$.
Let $\bar{\sigma}_{i}$ be the linear endomorphism of $\g\oplus \C$, defined by
$$\bar{\sigma}_{i}(a+\lambda)=\sigma_i(a)+\lambda\    \    \mbox{ for }a\in \g,\ \lambda\in \C.$$
From the definition of the action of $\Lieu^{\ge 0}$ in Lemma \ref{lem:BorelMod}, we have
$$\bar{\sigma}_{i}(X\cdot v)=\sigma_i(X)\bar{\sigma}_{i}(v)
\   \   \   \mbox{ for }X\in \Lieu^{\ge 0},\ v\in \g\oplus \C.$$
Furthermore, this holds for all $X\in U(\Lieu^{\ge 0})$. By using this property, it is straightforward to show that
the linear automorphism $\sigma_{i}\otimes \bar{\sigma}_{i}$ of $U(\Lieu)\otimes (\g\oplus \C)$ reduces to a
linear automorphism  of $\T$, which we denote by $\tilde{\sigma}_{i}$.
We have $\tilde{\sigma}_{i}(a+\mu\mathbf{1})=\sigma_i(a)+\mu \mathbf{1}$ for $a\in \g,\ \mu\in \C$ and 
$\tilde{\sigma}_{i}(Xv)=\sigma_{i}(X)\tilde{\sigma}_{i}(v)$ for $X\in \mathcal{U}(\Lieu),\  v\in \T.$
 In particular, we have
\begin{eqnarray*}
&&\tilde{\sigma}_{i}(a_{m_{0},\m}v)=
\tilde{\sigma}_{i}(a\otimes t_0^{m_0}\bdt^\m v)=\sigma_{i}(a)\otimes t_0^{m_0}\bdt^\m \tilde{\sigma}(v)
=\sigma_{i}(a)_{m_0,\m}\tilde{\sigma}_{i}(v)
\end{eqnarray*}
for $a\in {\g}_{\m},\  m_{0}\in\mathbb{Z},\  \m\in{\mathbb{Z}}^{r}.$
Since $\T$  as an \rtva{} is generated by $\g$ and $\mathbf{1}$, it follows that
$\tilde{\sigma}_{i}$ is an automorphism of \rtva{} $\T$.
With $\sigma_{i}$ of order $N_{i}$, we see that $\tilde{\sigma}_{i}$ is also of order $N_{i}$.
\end{proof}

Now, we are in a position to present the main result of this paper.

\begin{thm}\label{last-thm}
Let $\ell\in \C$. For any restricted $\Lie$-module $W$ of level $\ell$,
there exists a $\tilde{\sigma}_{0}$-twisted $\T$-module structure on $W$, which is uniquely determined by
\begin{eqnarray*}
\Ymod{W}{a}{x_0}{\x}=a^{\tau}(x_0,\x) \quad \fortext a\in \g
\end{eqnarray*}
and which satisfies the condition that
\begin{eqnarray}\label{eEquivariance-thm}
\Ymod{W}{\sigma_i( v)}{x_0}{\x}=\lim_{x_i\rightarrow \omega_i^{-1}x_i}\Ymod{W}{v}{x_0}{\x}
\end{eqnarray}
for $v\in \T,\ 1\le i\le r$.
On the other hand, for any $\tilde{\sigma}_{0}$-twisted $\T$-module $(W,Y_W)$
which satisfies (\ref{eEquivariance-thm}), $W$ is a restricted $\Lie$-module
of level $\ell$ with
\begin{eqnarray*}
a^{\tau}(x_0,\x)=\Ymod{W}{a}{x_0}{\x}\quad \fortext a\in \g.
\end{eqnarray*}
\end{thm}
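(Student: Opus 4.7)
For the forward direction, given a restricted $\Lie$-module $W$ of level $\ell$, the commutator formula (\ref{eq:commutator}) shows that $U_W=\Span\set{a^{\tau}(x_0,\x)}{a\in\g}$ is a graded local subspace of $\ESalltwisted{W}$. By Corollary \ref{cgenerated-algebra} (with $N=N_0$), $U_W$ generates a closed graded local subspace $\<U_W\>$, which is an $(r+1)$-toroidal vertex algebra with $W$ as a faithful $\sigma$-twisted module, and Corollary \ref{cmodule-homomorphism} furnishes an $\Lieu$-module homomorphism $\phi_W\colon \T\to\<U_W\>$ sending $\vac\mapsto 1_W$ and $a\mapsto a^{\tau}(x_0,\x)$ for $a\in\g$. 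Setting $\Ymod{W}{v}{x_0}{\x}:=\phi_W(v)$ for $v\in\T$ is the natural candidate; uniqueness of the resulting structure is forced by the twisted iterate formula (\ref{eq:iterate1}) together with the fact that $\T$ is generated from $\g+\C\vac$ by iterated modes of $Y(a;\cdot)$, $a\in\g$.

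To show this candidate is a $\tilde{\sigma}_0$-twisted $\T$-module, I would first promote $\phi_W$ to a homomorphism of $(r+1)$-toroidal vertex algebras, i.e., verify $\phi_W(Y(u;z_0,\z)v)=Y_{\mathcal{E}}(\phi_W(u);z_0,\z)\phi_W(v)$ for $u,v\in\T$. For $u\in\g$ this follows from Lemma \ref{lem:ActingOnUW} (which equates the $\Lieu$-action on $\<U_W\>$ with the components of $Y_{\mathcal{E}}(a^{\tau}(x_0,\x);z_0,\z)$) together with the $\Lieu$-linearity of $\phi_W$. For general $u$ an induction on the $\Lieu^{-}$-word length reduces to the $u\in\g$ case via weak associativity (Lemma \ref{lJ-weak}(b)) and the commutator formula (\ref{ecommutator-rva-m}). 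Next, the identity $\phi_W\circ\tilde{\sigma}_0=\sigma\circ\phi_W$ holds on the generators $\g+\C\vac$ because $a\in\g^{k_0}$ implies $a^{\tau}(x_0,\x)\in x_0^{-k_0/N_0}\ESall{W}$, and propagates to $\T$ by the previous step; pulling back the $\sigma$-twisted structure on $\<U_W\>$ therefore yields a $\tilde{\sigma}_0$-twisted $\T$-module structure on $W$. The equivariance (\ref{eEquivariance-thm}) is immediate for $a\in\g_{(k_0,\bk)}$ since the $\x$-modes of $a^{\tau}(x_0,\x)$ lie in $\bk+\Lambda(\Nfactor)$, and extends to $\T$ by an identical induction using that each $\tilde{\sigma}_i$ is a vertex algebra automorphism restricting to $\sigma_i$ on $\g$ and that the automorphism $\sigma_i$ of $\Lieu$ fixes all $\bdt$-modes.

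For the converse, let $(W,Y_W)$ be a $\tilde{\sigma}_0$-twisted $\T$-module satisfying (\ref{eEquivariance-thm}). For $a\in\g_{(k_0,\bk)}$, Lemma \ref{ehom-power} yields $\Ymod{W}{a}{x_0}{\x}\in x_0^{-k_0/N_0}\ESall{W}$, and (\ref{eEquivariance-thm}) forces the $\x$-modes to be supported on $\bk+\Lambda(\Nfactor)$; one thus defines $a\otimes t_0^{k_0/N_0+m_0}\bdt^{\m}$ (for $\m\in\bk+\Lambda(\Nfactor)$) to act as the coefficient of $x_0^{-k_0/N_0-m_0-1}\x^{-\m}$ in $\Ymod{W}{a}{x_0}{\x}$, and $\cent$ as the scalar $\ell$, so that $a^{\tau}(x_0,\x)=\Ymod{W}{a}{x_0}{\x}\in\ESalltwisted{W}$ and $W$ is automatically restricted. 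The defining commutation relation (\ref{eq:commutator}) for $\Lie$ then follows from the twisted commutator formula (\ref{eq:commutator2}) applied to $a,b\in\g\subset\T$, combined with the $\Lieu^{\ge 0}$-module structure on $\g\oplus\C$ from Lemma \ref{lem:BorelMod} (which computes $a_{j,\m}b$ for $a\in\g$, giving $[a,b]$ at $j=0$, $\<a,b\>\ell\vac$ at $j=1$, and zero beyond). I expect the main technical obstacle to be the promotion of $\phi_W$ to a vertex algebra homomorphism in the forward direction: the $\Lieu^{-}$-word induction with weak associativity requires careful bookkeeping, especially because $\T$ need not be faithful so one cannot directly invoke a converse of Lemma \ref{ltwistedm-rva}.
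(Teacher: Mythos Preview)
Your proposal is correct and follows essentially the same route as the paper: construct $\phi_W$ via Corollary~\ref{cmodule-homomorphism}, upgrade it to an \rtva{} homomorphism, check $\phi_W\circ\tilde\sigma_0=\sigma\circ\phi_W$ on generators and propagate, then read off equivariance; and for the converse, use (\ref{eq:commutator2}) together with the computation of $a_{j,\m}b$ in $\T$.

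The one substantive difference concerns exactly the step you flag as the main obstacle. You propose to promote $\phi_W$ to an \rtva{} homomorphism by an induction on $\Lieu^{-}$-word length via weak associativity and the commutator formula. The paper instead observes directly that for $a\in\g$ and $v\in\T$,
\[
\phi_W\bigl(Y(a;z_0,\z)v\bigr)=\phi_W\bigl(a^{\Lieu}(z_0,\z)v\bigr)
=\Ye{a^{\tau}(x_0,\x)}{z_0}{\z}\phi_W(v)=\Ye{\phi_W(a)}{z_0}{\z}\phi_W(v),
\]
which is immediate from Lemma~\ref{lem:ActingOnUW} and the $\Lieu$-linearity of $\phi_W$, and then invokes Lemma~2.10 of \cite{LTW}: a linear map between \rtva{}s that sends $\vac$ to $\vac$ and intertwines the vertex operators of a generating set is an \rtva{} homomorphism. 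This black-box result encapsulates your induction and removes the faithfulness worry you raised (no converse of Lemma~\ref{ltwistedm-rva} is needed). Similarly, the paper propagates the equivariance (\ref{eEquivariance-thm}) from $\g$ to all of $\T$ by the twisted iterate formula (\ref{eq:iterate}) rather than a separate induction. So your plan would succeed, but the paper's citation of the existing Lemma~2.10 in \cite{LTW} is the cleaner way to close the step you were concerned about.
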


\begin{proof}
From Corollary \ref{cgenerated-algebra}, $W$ is a faithful $\sigma$-twisted $\<U_W\>$-module.
To show that $W$ is a $\tilde{\sigma}_{0}$-twisted $\T$-module,  we next show that there exists an \rtva{}
homomorphism $\psi:\T\rightarrow\<U_W\>$ such that $\psi\circ \tilde{\sigma}_{0}=\sigma\circ\psi$.

From Corollary \ref{cmodule-homomorphism}, we have an $\Lieu$-module homomorphism
$\phi_{W}$ from $\T$ to $\<U_{W}\>$. Notice that
\begin{eqnarray*}
\phi_{W}(a)=a^{\tau}(x_0,\x)\quad\fortext a\in\g.
\end{eqnarray*}
Then for any $a\in \g$, $v\in \T$, we have
\begin{eqnarray*}
&&\phi_{W}\left(\Y{a}{z_0}{\z}v\right)=\phi_{W}\left(a^{\Lieu}(z_0,\z)v \right)\\
&&\quad=\Ye{a^{\tau}(x_0,\x)}{z_0}{\z}\phi_{W}(v)=\Ye{\phi_{W}(a)}{z_0}{\z}\phi_{W}(v).
\end{eqnarray*}
From Lemma 2.10 in \cite{LTW}, we see that $\phi_{W}$ is
an \rtva{} homomorphism from $\T$ to $\<U_W\>$.

For $a\in \g_{s,\rsymbol{s}}$ with $(s,{\bf s})\in \Z\times \Z^{r}$, as $a^{\tau}(x_0,\x)\in \ESalltwisted{W}_s$,
we have
\begin{eqnarray*}
&&\phi_{W}(\tilde{\sigma}_{0}(a))=\rtu{N}^s \phi_{W}(a)=\rtu{N}^s
a^{\tau}(x_0,\x) =\sigma(\phi_{W}(a)).
\end{eqnarray*}
Since $\T$ is generated by $\g$ and $\mathbf{1}$, it follows that
$\phi_{W}\circ\tilde{\sigma}_{0}=\sigma\circ \phi_{W}$. Then
$\phi_{W}$ is an \rtva{} homomorphism that we need.
Consequently, $W$ is a $\tilde{\sigma}_{0}$-twisted $\T$-module 
with $Y_{W}(a;x_0,\x)=a^{\tau}(x_0,\x)$ for $a\in \g$. Let $a\in \g_{\m}$ with $\m\in \Z^r$. 
For $1\le i\le i$, we have
\begin{eqnarray}
(\sigma_ia)^{\tau}(x_0,\x)=\omega_i^{m_i}a^{\tau}(x_0,\x)=\lim_{x_i\rightarrow \omega_i^{-1}x_i}a^{\tau}(x_0,\x).
\end{eqnarray}
As $\g+\C$ generates $\T$ as a toroidal vertex algebra, 
using the twisted iterate formula (\ref{eq:iterate}) we obtain (\ref{eEquivariance-thm}).

Conversely, let $(W,Y_W)$ be a $\tilde{\sigma}_{0}$-twisted $\T$-module. From the construction of \rtva{} $\T$,
 we have
\begin{eqnarray*}
&&a_{j,\m}b=\left\{\begin{array}{ll} [a,b] & \text{if }j=0\\
\<a,b\>\ell {\bf 1}&\text{if }j=1\\ 0 & \text{if }j\geq 2    \end{array}\right.
\end{eqnarray*}
 for $a\in {\g}_{\mathbf{s}},\  b\in \g, \ \m\in \mathbf{s}+\Lambda(\Nfactor)
 \subset \Zr$.
Then using (\ref{eq:commutator2}) we obtain
\begin{eqnarray*}
&&\left[ \Ymod{W}{a}{x_0}{\m},\Ymod{W}{b}{y_0}{\y} \right]\\
&=&\y^\m\Ymod{W}{ [a,b]}{y_0}{\y}\dfunc{x_0}{y_0}{x_0}\left(\frac{y_0}{x_0}
\right)^{\frac{s}{N}}+\y^\m\<a,b\>\ell \frac{\partial}{\partial y_0}\left(
    \dfunc{x_0}{y_0}{x_0}\left(\frac{y_0}{x_0} \right)^{\frac{s}{N}}\right)
\end{eqnarray*}
for $\m\in \rsymbol{s}+\Lambda(\Nfactor)$. On the other hand, from (\ref{eEquivariance-thm}) we have
$ \Ymod{W}{a}{x_0}{\m}=0$ for $\m\notin \rsymbol{s}+\Lambda(\Nfactor)$.
Combining these with (\ref{eq:commutator}),
we conclude that $W$ is a restricted $\Lie$-module of level $\ell$ with
$a^{\tau}(x_0,\x)=\Ymod{W}{a}{x_0}{\x}$ for $a\in \g$.
\end{proof}

\end{document}